\newtheorem{theorem}{Theorem}[section]
\newtheorem{lemma}[theorem]{Lemma}
\newtheorem{corollary}[theorem]{Corollary}
\newtheorem{prop}[theorem]{Proposition}
\theoremstyle{definition}
\newtheorem*{qu*}{Question}
\theoremstyle{remark}
\newcommand\N{\mathbb{N}}
\newcommand\R{\mathbb{R}}
\newcommand\Z{\mathbb{Z}}
\newcommand\E{\operatorname{\mathbb{E}}}
\newcommand\cA{\mathcal{A}}
\newcommand\cB{\mathcal{B}}
\newcommand\cF{\mathcal{F}}
\newcommand\cH{\mathcal{H}}
\newcommand\cN{\mathcal{N}}
\newcommand\cP{\mathcal{P}}
\renewcommand\Pr{\operatorname{\mathbb{P}}}
\newcommand\id{\hbox{$1\mkern-6.5mu1$}}
\newcommand\eps{\varepsilon}
\renewcommand\le{\leqslant}
\renewcommand\ge{\geqslant}
\renewcommand\to{\rightarrow}
\newcommand\ds{\displaystyle}
\begin{document}

\title{The Erd\H{o}s--Selfridge problem with square-free moduli}
\author{Paul Balister \and B\'ela Bollob\'as \and Robert Morris \and \\
Julian Sahasrabudhe \and Marius Tiba}

\address{Department of Mathematical Sciences,
University of Memphis, Memphis, TN 38152, USA}\email{pbalistr@memphis.edu}

\address{Department of Pure Mathematics and Mathematical Statistics, Wilberforce Road,
Cambridge, CB3 0WA, UK, and Department of Mathematical Sciences,
University of Memphis, Memphis, TN 38152, USA}\email{bb12@cam.ac.uk}

\address{IMPA, Estrada Dona Castorina 110, Jardim Bot\^anico,
Rio de Janeiro, 22460-320, Brazil}\email{rob@impa.br}

\address{Peterhouse, Trumpington Street, University of Cambridge, CB2 1RD, UK
\and IMPA, Estrada Dona Castorina 110, Jardim Bot\^anico,
Rio de Janeiro, 22460-320, Brazil }\email{julians@impa.br}

\address{Department of Pure Mathematics and Mathematical Statistics, Wilberforce Road,
Cambridge, CB3 0WA, UK}\email{mt576@dpmms.cam.ac.uk}

\thanks{The first two authors were partially supported by NSF grant DMS 1600742,
the third author was partially supported by CNPq (Proc.~303275/2013-8) and
FAPERJ (Proc.~201.598/2014), and the fifth author was supported by a Trinity Hall Research Studentship.}


\begin{abstract}
A \emph{covering system} is a finite collection of arithmetic progressions whose union is the set of integers. The study of covering systems with distinct moduli was initiated by Erd\H{o}s in 1950, and over the following decades numerous problems were posed regarding their properties. One particularly notorious question, due to Erd\H{o}s, asks whether there exist covering systems whose moduli are distinct and all odd. We show that if in addition one assumes the moduli are square-free, then there must be an even modulus.
\end{abstract}

\maketitle

\section{Introduction}

Almost 70 years ago, Erd\H{o}s~\cite{E50} initiated the study of \emph{covering systems}, i.e., finite collections of arithmetic progressions\footnote{We exclude the trivial arithmetic progression $\Z$.} that cover the integers, with distinct moduli. Many well-known questions and conjectures have been posed about such systems (some of which appeared frequently in Erd\H{o}s's collections of open problems), and in recent years there has been significant progress on several of these. A first crucial step was taken in 2007, by Filaseta, Ford, Konyagin, Pomerance and Yu~\cite{FFKPY}, who proved that the sum of the reciprocals of the moduli grows quickly with the minimum modulus, and also confirmed a conjecture of Erd\H{o}s and Graham~\cite{EG} on the density of the uncovered set. A further important breakthrough was made in 2015, by Hough~\cite{H}, who resolved the so-called `minimum modulus problem' of Erd\H{o}s~\cite{E50} by showing that the minimum modulus is bounded. More recently, the current authors~\cite{DUS} developed a general method (based on that of~\cite{H}) for attacking problems of this type, and used it to study the density of the uncovered set, and to prove a conjecture of Schinzel~\cite{Sch} by showing that there must exist two moduli, one of which divides the other.

In this paper we will further develop the method of~\cite{DUS} in order to make progress on another old and well-known question: does there exist a covering system whose moduli are distinct and all odd? This question appears to have first been asked by Erd\H{o}s~\cite{E65} in 1965, and a few years later he conjectured (see~\cite{E73a}) that there does exist such a system. In 1977 he went further, conjecturing~\cite{E77} that there exist covering systems with square-free moduli, all of whose prime factors are arbitrarily large. On the other hand (as recounted, for example, in~\cite{FFK}), Selfridge believed that there do \emph{not} exist such systems, and (perhaps as a result) the question has become known as the Erd\H{o}s--Selfridge problem. Apart from its intrinsic appeal, the problem is motivated by a theorem of Schinzel~\cite{Sch}, who discovered a connection between the non-existence of such covering systems and the irreducibility of certain polynomials. More precisely, he showed that if no covering system with distinct, odd moduli exists, then for every polynomial $f(x) \in \Z[X]$ with $f \not\equiv 1$, $f(0) \ne 0$ and $f(1) \ne -1$, there exists an infinite arithmetic progression of values of $n \in \Z$ such that $x^n + f(x)$ is irreducible over the rationals.

The first progress on the Erd\H{o}s--Selfridge problem was made by Simpson and Zeilberger~\cite{SZ}, who proved that the moduli of a covering system with distinct, odd, square-free numbers use at least 18 primes (this was later improved to 22 primes by Guo and Sun~\cite{GS}). A major step forward was taken by Hough and Nielsen~\cite{HN}, who used a refined (and carefully optimised) version of the method of Hough~\cite{H} to prove that every covering system with distinct moduli contains a modulus that is divisible by either~2 or~3. The general method of~\cite{DUS} (which, as noted above, is also based on that of~\cite{H}) provides a short proof of the following slight strengthening of this result (see~\cite[Theorem~1.4]{DUS}): every covering system with distinct moduli contains either $(a)$ an even modulus, $(b)$ a modulus divisible by $3^2$, or $(c)$ (possibly equal) moduli $d_1$ and $d_2$ with $3 \mid d_1$ and $5 \mid d_2$. Here we will further develop the method of~\cite{DUS}, and use it to solve the Erd\H{o}s--Selfridge problem in the square-free case.

\begin{theorem}\label{thm:SFES}
In any finite collection of arithmetic progressions with distinct square-free moduli that covers the integers, at least one of the moduli is even.
\end{theorem}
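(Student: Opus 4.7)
The plan is to push the distortion method developed in~\cite{DUS}. Assume for contradiction that $\cA = \{a_j + d_j\Z : j \in J\}$ is a covering system with distinct, odd, squarefree moduli. Let $p_1 < p_2 < \dots < p_k$ be the primes dividing some $d_j$, all of which are odd, and set $P_i = p_1 \cdots p_i$. We will inductively construct probability measures $\mu_i$ on $\Z/P_i\Z$ supported on residues $x$ that are not covered by any $a_j + d_j\Z$ with $d_j \mid P_i$; if the construction reaches $i = k$, any $x$ in the support of $\mu_k$ lifts to an integer in $[0, P_k)$ that is uncovered by $\cA$, contradicting the covering property.

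The iteration from $\mu_{i-1}$ to $\mu_i$ proceeds in two steps. First, lift $\mu_{i-1}$ to $\Z/P_i\Z$ via CRT and apply a \emph{distortion}: multiply by a weight function $w_i \colon \Z/p_i\Z \to [0,1]$ and renormalize. Second, zero out the mass on residues newly covered by APs with $p_i \mid d_j$. The purpose of $w_i$ is to shift mass away from the residues modulo $p_i$ that are about to be heavily absorbed, while keeping $\|w_i\|_\infty/\E\, w_i$ close to $1$, since an accumulated $L^\infty$ blow-up of the density of $\mu_i$ relative to uniform measure would eventually force the surviving total mass to collapse. The squarefree hypothesis makes each relevant AP rigid: an $a_j + d_j\Z$ with $p_i \| d_j$ and remaining prime divisors among $p_1, \dots, p_{i-1}$ intersects every CRT fiber in exactly one residue modulo $p_i$ (determined by $a_j$), so the choice of $w_i$ reduces to a clean combinatorial optimisation on $\Z/p_i\Z$.

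Since~\cite{DUS} already forces any covering with distinct moduli to contain an even modulus, a modulus divisible by~$9$, or moduli divisible by $3$ and by $5$, the squarefree odd case starts with the structural constraint that $\cA$ has moduli divisible by $3$ and by $5$, which we will iterate and sharpen. The expected new ingredient is to choose the distortions jointly over a block of several consecutive primes rather than prime-by-prime, so that a costly distortion at $p_i$ can be amortised against absorption gains at $p_{i+1}, p_{i+2}, \dots$; this should yield a strictly better global tradeoff than an independent prime-by-prime analysis. The main obstacle will be verifying the quantitative inequalities that drive the induction: without the prime $2$ providing factor-of-$\tfrac12$ savings, and without the flexibility of non-squarefree moduli exploited in~\cite{DUS}, the margins are tight, and closing the argument will likely require a delicate case analysis of the small-prime profile of $\cA$ together with careful, possibly computer-assisted, numerical verification of the resulting inequalities.
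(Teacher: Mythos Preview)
Your proposal correctly identifies the overall framework---the distortion/sieve method of~\cite{DUS}, translated via CRT to the geometric picture of hyperplanes---but it is a plan rather than a proof, and your guess at the decisive new ingredient is not what the paper actually does. You propose amortising distortions across blocks of consecutive primes. The paper instead introduces two concrete innovations. First, it removes all codimension-$1$ hyperplanes (equivalently, the progressions with prime modulus) before running the sieve; this permits a strictly sharper second-moment bound, namely~\eqref{eq:hyperplane:removal}, where the extra saving is the term $-2c_{k-1}(1)+1$. Second, for the small primes $\{3,5,7,11\}$ it abandons the iterative distortion entirely and constructs the starting measure $\Pr_5$ by a direct linear-programming optimisation over all (reduced) configurations of hyperplanes on $Q_5$. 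Your ``joint over a block'' intuition is thus partially vindicated---the first four primes are indeed treated as a unit---but the mechanism is LP on the full configuration space rather than a block of multiplicative weights, and the improved second-moment bound for the middle primes is the other half of what closes the numerical gap.

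What is genuinely missing from your write-up is any quantitative content: you do not specify the weights $w_i$, state or prove moment bounds, or indicate how the resulting inequalities would close. The paper's argument hinges on a three-tier split (primes $\le 11$; primes in $[13,73]$; primes $>73$) with explicit numerical targets---$c_5(3)-3c_5(1)/4 \le 9.019$ from the LP, then $f_{21}(\cA) \le 138.877$ after optimising $\delta_6,\dots,\delta_{21}$, then the termination criterion from~\cite{DUS}---each verified by computer. Without analogous concrete estimates your outline does not yet constitute a proof, and the ``block distortion'' idea as stated would still need a replacement for the hyperplane-removal bound to get the numbers under the required thresholds.
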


We shall prove Theorem~\ref{thm:SFES} in a (slightly more general) geometric setting; a second aim of this paper will be to investigate covering systems in this setting. Let $S_1,\ldots,S_n$ be finite sets with at least two elements, and set
$$Q = S_1 \times \cdots \times S_n.$$
If $A = A_1 \times\cdots \times A_n \subseteq Q$ with each $A_k$ either equal to $S_k$ or a singleton element of $S_k$, then we say that $A$ is a {\em hyperplane}. We will write $A = [x_1,\ldots,x_n]$, where $x_k \in S_k \cup \{ * \}$ for each $k \in [n]$, and $*$ indicates that $A_k = S_k$. Let us write $F(A) = \{ k : x_k \in S_k \}$ for the set of \emph{fixed coordinates} of $A$, and say that two hyperplanes $A$ and $A'$ are \emph{parallel\/} if $F(A) = F(A')$.

Restating Theorem~\ref{thm:SFES} in this geometric setting gives the following theorem.


\begin{theorem}\label{thm:GSFES}
For each\/ $k \in [n]$, let\/ $p_k$ be the $k$th prime, and set\/ $S_k = [p_{k+1}]$. Any collection of hyperplanes that covers\/ $Q := S_1 \times \dots\times S_n$ contains two parallel hyperplanes.
\end{theorem}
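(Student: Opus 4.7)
The plan is to prove the contrapositive by adapting the probabilistic distortion method of \cite{DUS}. Assume $\cH$ covers $Q$ with no two parallel hyperplanes, so for each nonempty $F\subseteq[n]$ there is at most one $A_F\in\cH$ with $F(A_F)=F$. I aim to construct a probability measure $\mu$ on $Q$ satisfying
\[
\sum_{\emptyset\ne F\subseteq[n]}\mu(A_F)<1,
\]
which would contradict the covering property.

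A product measure $\mu=\bigotimes_k\mu_k$ is manifestly insufficient: after the adversary optimises the choice of each $A_F$, the bound reduces to $\prod_k(1+m_k)-1$ where $m_k=\max\mu_k\ge 1/p_{k+1}$, and this tends to $+\infty$ with $n$. So $\mu$ must carry correlations between coordinates. Following \cite{DUS}, I would construct $\mu$ iteratively, processing coordinates $k=1,\dots,n$ from the smallest prime $p_2=3$ upwards. Start with $\mu_0$ uniform on $Q$ and, at step $k$, replace the conditional distribution on $S_k$ (given the other coordinates) by a distorted version. This affects precisely the hyperplanes $A_F$ with $k\in F$: the mass of those with $\max F=k$ is reduced (resolved at this step), while those with $\max F>k$ change only in a controlled way. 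Hyperplanes with $\max F<k$ are unaffected, since in that case $k\notin F$; so once resolved they remain resolved.

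The technical ingredients would be: (i) an explicit distortion on each $S_k$ depending on a smoothing parameter $\eps_k$; (ii) a potential function $\Phi(\mu_k)$ upper-bounding $\sum_F\mu_k(A_F)$ plus a forecast for the as-yet-unresolved contribution; and (iii) an iterative inequality $\Phi(\mu_k)\le\Phi(\mu_{k-1})+\delta_k$ with $\sum_k\delta_k$ small enough that $\Phi(\mu_n)<1$.

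The central obstacle is the bookkeeping at step $k$: there can be up to $2^{k-1}$ hyperplanes with $\max F=k$ (one for each subset of $[k-1]$ together with $k$), each carrying mass $\sim 1/p_{k+1}=\Theta(1/(k\log k))$, so a crude union bound yields a term of size $2^{k-1}/p_{k+1}$ which diverges exponentially in $k$. The machinery of \cite{DUS} must therefore be developed further, to show that a single distortion in the $k$-th coordinate reduces the mass of all $2^{k-1}$ hyperplanes simultaneously with a combined cost that is $\ell^2$-like rather than $\ell^1$-like in the number of hyperplanes. Pushing this analysis through uniformly in $k$---especially at the early steps, where $p_2=3,p_3=5,\dots$ leave very little room for smoothing---should be the heart of the proof.
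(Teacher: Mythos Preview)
Your proposal is a research outline, not a proof: you correctly identify the \cite{DUS} distortion framework, you correctly see that an $\ell^2$-type (second-moment) control is needed rather than a union bound, and you correctly locate the hard part at the small primes. But every sentence from ``The technical ingredients would be'' onward is a statement of what \emph{remains to be done}, and the final paragraph explicitly concedes that the core analysis has not been carried out. So as written there is nothing to check.

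More importantly, the paper shows that ``push the \cite{DUS} machinery harder'' is not enough by itself; two genuinely new ingredients are required, and your outline contains neither. First, the straight second-moment bound $M_k^{(2)}\le c_{k-1}(3)/|S_k|^2$ from \cite{DUS} is too weak at small primes; the paper obtains the sharper bound $M_k^{(2)}\le (c_{k-1}(3)-2c_{k-1}(1)+1)/|S_k|^2$ by first \emph{removing the codimension-$1$ hyperplanes} (replacing each $S_k$ by a set one smaller, $k\le 21$) so that every surviving $F$ has $|F|\ge 2$. Second, the initial measure is not taken to be uniform: for the first four primes $3,5,7,11$ the paper constructs a bespoke probability measure $\Pr_5$ on $R_5\subseteq[2]\times[4]\times[6]\times[10]$ by \emph{linear programming}, minimising the figure of merit $c_5(3)-\tfrac34 c_5(1)$ over all admissible measures, for every possible configuration of the small-prime hyperplanes (reduced by symmetry to a few thousand cases). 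Only after this does the iterative scheme for $13\le p\le 73$ (with numerically optimised $\delta_k$) and then the asymptotic termination criterion from \cite{DUS} for $p>73$ close the gap. Your plan of starting from the uniform measure and hoping a single distortion at each step suffices is exactly what the paper's analysis shows fails without these additional ideas.
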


To see the equivalence between Theorems~\ref{thm:SFES} and~\ref{thm:GSFES}, note that, by the Chinese Remainder Theorem, there is a natural equivalence\footnote{To be precise, the progression $a + d\Z$ with $d = \prod_{i \in I} p_i$ corresponds to the hyperplane $A = [x_1,\ldots,x_n]$ where $x_i = a \bmod p_i$ if $i \in I$, and $x_i = *$ otherwise. Note that excluding $\Z$ is equivalent to forbidding $F(A) = \emptyset$.}  between finite collections $\cA$ of arithmetic progressions with square-free, odd, $p_{n+1}$-smooth moduli that cover the integers, and finite collections $\cH$ of hyperplanes that cover the box $Q = [p_2] \times \cdots \times [p_{n+1}]$. Moreover, if the moduli of $\cA$ are distinct then the hyperplanes in $\cH$ are non-parallel .

In order to motivate our second main theorem, let us next state, in this geometric setting, a special case (for square-free moduli) of the breakthrough result of Hough~\cite{H} which resolved the Erd\H{o}s minimum modulus problem.

\begin{theorem}[Hough, 2015]\label{thm:MM}
Let\/ $p_1,\ldots,p_n$ be the first $n$ primes. There exists a constant\/ $C$ such that if\/ $\cA$ is a collection of hyperplanes that cover $Q := [p_1] \times \dots\times [p_n]$, then either two of the hyperplanes are parallel, or there exists a hyperplane\/ $A \in \cA$ with $F(A) \subseteq [C]$.
\end{theorem}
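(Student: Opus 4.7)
The plan is to apply the weighted-sieve method of Hough~\cite{H}, in the geometric form developed in~\cite{DUS}. Assume for contradiction that $\cA$ is a collection of non-parallel hyperplanes covering $Q=[p_1]\times\cdots\times[p_n]$, with every $A\in\cA$ satisfying $F(A)\not\subseteq[C]$; equivalently, $\max F(A)>C$ for all $A\in\cA$. The goal is to construct a probability measure $\mu$ on $Q$ with $\sum_{A\in\cA}\mu(A)<1$, producing an uncovered point and a contradiction.

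The uniform measure does not suffice: with $\mu_{\mathrm{unif}}(A)=\prod_{k\in F(A)}p_k^{-1}$, the non-parallel hypothesis allows at most one hyperplane per $F$-set, so the total mass of hyperplanes with $\max F(A)=k$ is at most $\frac{1}{p_k}\prod_{j<k}(1+p_j^{-1})\asymp(\log p_k)/p_k$, and the sum in $k$ diverges. The remedy, following Hough, is to use a measure that evolves as one processes the hyperplanes in decreasing order of $\max F(A)$: at each level $k$, after removing the contribution of the hyperplanes with $\max F(A)=k$, one averages over the fibre $[p_k]$ and reweights the measure on $Q_{k-1}=[p_1]\times\cdots\times[p_{k-1}]$, preserving a quantitative \emph{smoothness} condition (typically an $L^2$ bound saying the measure is close to uniform). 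The key technical step is to show both that the mass removed at level $k$ is summable in $k$, and that the updated measure remains smooth enough for the induction to continue down to level $C+1$. Taking $C$ large enough then forces the total deleted mass to be strictly less than $1$, so some point of the resulting support survives; since no hyperplane of $\cA$ has $F(A)\subseteq[C]$, this point is uncovered, a contradiction.

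The principal obstacle is propagating the smoothness condition from level $k$ to level $k-1$: deleting the contribution of the level-$k$ hyperplanes can introduce correlations that destroy uniformity of the measure on $Q_{k-1}$. Hough's original argument controls these correlations via a delicate Fourier/Plancherel estimate on $\Z/p_k\Z$, exploiting the cancellation in characters of large order, while the framework of~\cite{DUS} replaces this with a more flexible combinatorial boost tracking an auxiliary higher-moment quantity. Balancing the smoothness gain at each step against the per-level loss, with constants tight enough for the sum over $k>C$ to be bounded, is the heart of the argument and what pins down the (large but explicit) value of $C$.
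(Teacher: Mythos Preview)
Your high-level strategy---build an evolving probability measure, control it via a moment/smoothness condition, and show the mass removed at each level is summable so that for $C$ large the total is below~$1$---is indeed the route the paper takes. (The paper states Theorem~\ref{thm:MM} as Hough's result; its own proof is via the more general Theorem~\ref{thm:GMM}, since $p_k/k\to\infty$, using the sieve of Sections~\ref{sec:sieve}--\ref{sec:general}.)

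However, you have the direction of the induction backwards. In both Hough's argument and the paper's, one processes coordinates in \emph{increasing} order: starting from the uniform measure $\Pr_0$, one builds $\Pr_k$ from $\Pr_{k-1}$ by extending uniformly over the new fibre $S_k$ and then distorting, with a cap $\delta_k$, to reduce mass on the newly revealed set $B_k=\bigcup_{\max F=k}A_F$ (see~\eqref{def:P_k}). You describe instead removing the level-$k$ hyperplanes and then ``averaging over the fibre $[p_k]$'' to obtain a measure on $Q_{k-1}$, i.e.\ going top-down. That is not how the method works, and more importantly it gives no natural place for the cap: without the $\delta_k$-truncation the distortions accumulate and the ``smoothness'' you want to propagate is lost. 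The obstacle you identify (propagating smoothness from $k$ to $k-1$) is an artefact of this reversed description; in the forward direction the bound $\Pr_k(A)\le c(I)\nu(J)$ for hyperplanes (Lemma~\ref{lem:AP}) follows by a one-line induction, and no Fourier or Plancherel estimate is needed.

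The paper's concrete mechanism is purely combinatorial: Theorem~\ref{prop:moments} gives $M_k^{(2)}\le c_{k-1}(3)/|S_k|^2$ with $c_{k-1}(3)=\prod_{j<k}\bigl(1+\tfrac{3}{(1-\delta_j)|S_j|}\bigr)$. With $|S_k|=p_k\sim k\log k$ and $\delta_k$ a small constant, $c_{k-1}(3)$ is only polylogarithmic in $k$, so $\sum_{k>C}M_k^{(2)}/\bigl(4\delta_k(1-\delta_k)\bigr)<1$ once $C$ is large, and Theorem~\ref{thm:general} finishes. Your proposal gestures at both the Fourier and the combinatorial-moment routes without committing to either or writing down any estimate; to turn it into a proof you need to fix the direction of the recursion and carry out this convergence calculation.
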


To deduce Theorem~\ref{thm:MM} from Hough's theorem, simply note that if $d$ is square-free and $p_C$-smooth, then $d \le M := \prod_{i = 1}^C p_i$. Using our method, we are able to prove the following strengthening of Theorem~\ref{thm:MM}.

\begin{theorem}\label{thm:GMM}
For every sequence of integers\/ $(q_k)_{k \ge 1}$ such that $q_k \ge 2$ for each $k \in \N$ and
$$\liminf_{k \to \infty} \frac{q_k}{k} > 3,$$
there exists a constant $C$ such that the following holds. Let\/ $\cA$ be a collection of hyperplanes that cover $Q := [q_1] \times \dots\times [q_n]$ for some $n \in \N$. Then either two of the hyperplanes are parallel, or there exists a hyperplane\/ $A \in \cA$ with $F(A) \subseteq [C]$.
\end{theorem}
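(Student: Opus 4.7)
The theorem strengthens Hough's minimum-modulus theorem (Theorem~\ref{thm:MM}), and I would attack it by adapting the potential-theoretic method developed by the authors in~\cite{DUS}, itself a refinement of the approach of Hough~\cite{H}. The argument should prove the contrapositive: if $\cA$ is a collection of pairwise non-parallel hyperplanes with $F(A) \not\subseteq [C]$ for every $A \in \cA$ and $C$ is sufficiently large, then $\mu\bigl(\bigcup_{A} A\bigr) < 1$ under a carefully chosen probability measure $\mu$ on $Q$, so $\cA$ fails to cover $Q$.

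Fix a product probability measure $\mu = \mu_1 \otimes \cdots \otimes \mu_n$ on $Q$, with each $\mu_k$ a (perhaps slightly perturbed) uniform distribution on $[q_k]$. For each $J \subseteq [n]$ let $\pi_J \colon Q \to S_J$ be the coordinate projection, $\mu_J = (\pi_J)_* \mu$, and for $U \subseteq Q$ write $g_J^U(a) = \mu(U \cap \pi_J^{-1}(a))/\mu(\pi_J^{-1}(a))$ for the conditional fiber density of $U$ above $a \in S_J$. Following the template of~\cite{DUS}, associate to the current uncovered set $U$ a weighted quadratic potential
$$\Phi(U) = \sum_{J \subseteq [n]} w_J \bigl\| g_J^U \bigr\|_{L^2(\mu_J)}^2,$$
with weights $w_J$ tuned so that $\Phi$ is a robust proxy for $\mu(U)$ in the sense that (i) $\Phi(U)$ stays bounded below by a positive function of $\mu(U)$, and (ii) removing a hyperplane $A$ changes $\Phi$ by at most $\mu(A \cap U)$ plus a small correction $\eta(A)$. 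Process the hyperplanes in an order adapted to $\mu$ --- for instance, in increasing $\max F(A)$, breaking ties by $|F(A)|$ --- and track $\Phi$ over the process.

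The crux is then a bound of the form $\sum_{A \in \cA} \eta(A) \le c \sum_{k > C} w(k)/q_k$, where $w(k)$ is a combinatorial weight capturing the number of hyperplanes or cross-term interactions involving coordinate $k$ at the stage when that coordinate becomes active. The hypothesis $\liminf q_k/k > 3$ is precisely what renders the corresponding tail sum controllable, with the constant $3$ presumably arising as the critical threshold at which the cost of cross-terms in the $L^2$ potential barely dominates the mass contributions of the hyperplanes active at coordinate $k$. With $C$ chosen large enough, the total correction is strictly smaller than the initial potential, so $\Phi$ stays positive throughout the process and a positive uncovered mass survives, contradicting coverage.

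The main obstacle is establishing this per-step correction bound. It requires a delicate choice of weights $w_J$ and of the fiber measures $\mu_k$, together with a careful bookkeeping argument for the cross-term cancellations in $\Phi$ when a hyperplane is removed. In both~\cite{H} and~\cite{DUS}, analogous estimates made essential use of number-theoretic regularities of the primes; here the setup must be purely combinatorial, and it is precisely the interplay between the projection dimensions $q_k$ and the cross-term combinatorics that forces the appearance of the threshold $3$ in the hypothesis.
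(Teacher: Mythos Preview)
Your plan points in the right direction --- a second-moment/potential argument is indeed what drives the proof --- but as written it is only a sketch, and the concrete mechanism that produces the threshold~$3$ is missing. The paper does \emph{not} work with a fixed product measure and a global potential $\Phi(U)$ on the uncovered set. Instead it builds a sequence of measures $\Pr_0,\Pr_1,\ldots,\Pr_n$ by distorting the uniform measure stage by stage: at stage~$k$ one looks at the fibre densities $\alpha_k(x)=|\{y:(x,y)\in B_k\}|/q_k$ of the newly revealed hyperplanes (those with $\max F(A)=k$), caps the redistribution by a parameter~$\delta_k$, and bounds the removed mass by $\Pr_k(B_k)\le M_k^{(2)}/(4\delta_k(1-\delta_k))$ where $M_k^{(2)}=\E_{k-1}[\alpha_k(x)^2]$. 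This is Theorem~\ref{thm:general}.

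The constant~$3$ then appears for a completely explicit combinatorial reason, not a vague balancing of cross-terms. Expanding the square and using the union bound gives
\[
 M_k^{(2)} \le \frac{1}{q_k^2}\sum_{F_1,F_2\in\cN_k}\Pr_{k-1}\bigl(A_{F_1}^{[k-1]}\cap A_{F_2}^{[k-1]}\bigr)
 \le \frac{c_{k-1}(3)}{q_k^2},
 \qquad
 c_{k-1}(3)=\prod_{j<k}\Bigl(1+\frac{3}{(1-\delta_j)q_j}\Bigr),
\]
where the~$3$ is literally $2^2-1$, the number of ordered pairs $(X_1,X_2)$ of subsets of a singleton with $X_1\cup X_2$ equal to it (Theorem~\ref{prop:moments}). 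If $q_j>(3+\eps)j$ for large~$j$ and $\delta_j=\eps/6$, then $c_{k-1}(3)\le O_{N,\eps}(1)\cdot k^{1-\eps/9}$, so $\sum_{k>C} M_k^{(2)}/\delta_k\lesssim \sum_{k>C} k^{-1-\eps/9}<1$ once $C$ is large. Your tail estimate $\sum_A\eta(A)\le c\sum_{k>C}w(k)/q_k$ is a first-moment shape and would give a threshold of~$1$, not~$3$; to get~$3$ you must isolate the \emph{second}-moment term and see the $2^t-1$ combinatorics explicitly. Until you have that computation (or an equivalent one in your potential language), the argument is incomplete.
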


Note that in Theorem~\ref{thm:MM} the sequence $(p_k)_{k \ge 1}$ grows asymptotically as $k \log k$, whereas in Theorem~\ref{thm:GMM} we allow the sequence $(q_k)_{k\ge 1}$ to grow only linearly. We will show (see Section~\ref{sec:proofGMM}) that Theorem~\ref{thm:GMM} is close to best possible, since there exists an example with $\liminf q_k/k = 1$ for which the conclusion of the theorem fails.

The rest of the paper is organized as follows. In Section~\ref{sec:sieve} we outline the sieve that 
we will use in the proofs, and in Section~\ref{sec:general} we state and prove our main technical
results, Theorems~\ref{thm:general} and~\ref{prop:moments}. 
In Section~\ref{sec:proofGMM} we deduce Theorem~\ref{thm:GMM}. Finally, we dedicate Section~\ref{sec:proofSFES} to the proof of our main result, Theorem~\ref{thm:SFES}.

\section{Definition of the Sieve}\label{sec:sieve}

In this section we will outline the proofs of Theorems~\ref{thm:GSFES} and~\ref{thm:GMM}. In particular,
we will generalize the method developed in \cite{DUS} to the geometric setting, and while doing so we will introduce several new ideas that will prove to be crucial in the proofs. For the convenience of the reader and for completeness, we will include full proofs of all intermediate results, even though several of them are direct adaptations of the corresponding results in~\cite{DUS}.

As in the Introduction, let $S_1,\ldots,S_k$ be finite sets with at least two elements, and set
$$Q := S_1 \times \cdots \times S_n.$$
and let $\cA$ be a collection of hyperplanes, no two of which are parallel. Set
$$\cF = \cF(\cA) := \big\{ F(A) : A \in \cA \big\} \subseteq \cP([n]) \setminus \{\emptyset\},$$
and (recalling that $F(A) \ne F(A')$ for distinct $A,A' \in \cA$) let us index the hyperplanes in $\cA$ by the corresponding set of fixed coordinate indices, so $\cA = \{A_F : F \in \cF \}$. Our goal is to estimate the density (under some probability measure) of the uncovered set
$$R := Q \setminus \bigcup_{F\in \cF} A_F.$$
Rather than considering the entire collection of hyperplanes $\cA$ all at once, we expose the
hyperplanes dimension by dimension and track how the density of the uncovered set evolves.
To be more precise, define, for each $1 \le k \le n$,
$$\cF_k := \big\{ F \in \cF: F \subseteq [k] \big\} \qquad \text{and} \qquad \cA_k := \big\{ A_F : F \in \cF_k \big\}$$
for the family of sets of fixed coordinate indices and the corresponding hyperplanes that are contained
in the initial segment~$[k]$. Let
$$R_k := Q \setminus \bigcup_{F \in \cF_k} A_F = Q \setminus \bigcup_{A_F \in \cA_k} A_F,$$
be the set of elements not contained in any of the hyperplanes of~$\cA_k$, so in particular $R_n = R$. We also write $\cN_k := \cF_k \setminus \cF_{k-1}$ for the family of ``new'' sets of fixed coordinate indices at the $k$th stage, i.e., those sets that contain $k$ and are contained in $[k]$, and define
\begin{equation}\label{def:B_i}
B_k := \bigcup_{F \in \cN_k} A_F
\end{equation}
to be the union of the hyperplanes exposed at step~$k$, so that $R_k = R_{k-1} \setminus B_k$.

It will often be convenient to consider $R_k$, $B_k$ and $A_F$ with $F\in \cF_k$ as subsets of
$$Q_k := S_1 \times \cdots \times S_k$$
by identifying $X \subseteq Q_k$ with $X \times S_{k+1} \times \dots \times S_n$. We call a set of this form $Q_k$-{\em measurable}.

\subsection{The probability measures $\Pr_k$}

The construction of the probability measures is similar to that in~\cite{DUS}, and no significant new ideas are needed. The main difference from~\cite{DUS} is that instead of starting with the uniform measure as our $\Pr_0$, we allow for possible optimization of the measure on the first few coordinates. In general we will start with some measure $\Pr_a$, to be determined, which will be supported on $R_a\subseteq Q_a= S_1 \times \cdots \times S_a$.

Our aim is to construct, for each $a < k \le n$, a measure $\Pr_k$ on $Q_k$ in such a way that $\Pr_k(B_k)$ is small, but without changing the measure of $B_i$ for any $i < k$. Fix a sequence of constants $\delta_{a+1},\dots,\delta_n \in [0,1/2]$, and assume that we have already defined a probability measure $\Pr_{k-1}$ on $Q_{k-1}$. Recall that $Q_k = Q_{k-1} \times S_k$, and hence the elements of $Q_k$ can be written as pairs $(x,y)$, where $x \in Q_{k-1}$ and $y \in S_k$. We may view $R_{k-1}$ as a collection of fibres of the form $F_x = \{ (x,y) : y \in S_k \} \subseteq Q_k$, where $\Pr_{k-1}$ is extended uniformly to a measure on $Q_k$ (so is uniform on each fibre), and view $R_k$ as being obtained from $R_{k-1}$ by removing $B_k$, i.e., by removing the points that are contained in the new hyperplanes of $\cA_k \setminus \cA_{k-1}$.

Now, for each $x \in Q_{k-1}$, define
\begin{equation}\label{def:alpha}
 \alpha_k(x) = \frac{\Pr_{k-1}\big( F_x \cap B_k \big)}{\Pr_{k-1}(x)}
 = \frac{\big| \big\{ y \in S_k : (x,y)\in B_k \big\} \big|}{|S_k|},
\end{equation}
that is, the proportion of the fibre $F_x$ that is removed at stage~$k$. The probability measure~$\Pr_k$ on $Q_k$ is defined as follows:
\begin{equation}\label{def:P_k}
 \Pr_k(x,y) :=
 \begin{cases}
   \max\bigg\{ 0, \, \ds\frac{\alpha_k(x)-\delta_k}{\alpha_k(x)(1-\delta_k)} \bigg\} \cdot \Pr_{k-1}(x,y),
   &\text{if }(x,y)\in B_k;\\[2ex]
   \min\bigg\{ \ds\frac{1}{1-\alpha_k(x)}, \, \frac{1}{1-\delta_k} \bigg\} \cdot \Pr_{k-1}(x,y),
   &\text{if }(x,y)\notin B_k.
 \end{cases}
\end{equation}
To motivate the definition above, note that if $\alpha_k(x) \le \delta_k$, then $\Pr_k(x,y) = 0$ for every
element of $Q_k$ that is covered in step~$k$, and that the measure is increased proportionally
elsewhere to compensate. On the other hand, for those $x \in Q_{k-1}$ for which $\alpha_k(x) > \delta_k$, we `cap' the distortion by increasing the measure at each point not covered in step $k$ by a factor of $1 / (1-\delta_k)$, and decreasing the measure on removed points by a corresponding factor.

%



The measure $\Pr_k$ satisfies the following simple properties, cf.~\cite[Lemmas~2.1 and~2.2]{DUS}.

\begin{lemma}\label{obs:probs}
 For any\/ $k > a$ and any\/ $Q_{k-1}$-measurable set\/~$S$ we have
 \begin{equation}\label{obs:Qmeas}
  \Pr_k(S) = \Pr_{k-1}(S).
 \end{equation}
 For any set\/ $S \subseteq Q$, we have
 \begin{equation}\label{e:dom}
 \Pr_k(S) \le \frac{1}{1-\delta_k} \cdot \Pr_{k-1}(S).
 \end{equation}
 Moreover, if\/ $S \subseteq B_k$ then
\begin{equation}\label{e:baddom}
  \Pr_k(S) \le \Pr_{k-1}(S).
   \end{equation}
\end{lemma}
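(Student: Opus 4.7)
The plan is to exploit the fact that the definition~\eqref{def:P_k} acts \emph{fiberwise}: for each $x \in Q_{k-1}$, the values $\Pr_k(x,y)$ for $y \in S_k$ are obtained from the uniformly extended measure $\Pr_{k-1}$ on the fiber $F_x = \{(x,y) : y \in S_k\}$ by a simple two-branch reweighting depending only on whether $(x,y) \in B_k$ and on the single parameter $\alpha_k(x) \in [0,1]$. All three assertions therefore reduce to elementary checks on a single fiber, together with the observation that extending $\Pr_k$ uniformly from $Q_k$ to $Q = Q_n$ preserves the pointwise ratio $\Pr_k(q)/\Pr_{k-1}(q)$ above each $(x,y) \in Q_k$.

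For~\eqref{obs:Qmeas}, since every $Q_{k-1}$-measurable set is a disjoint union of fibers $F_x$, it suffices to verify $\Pr_k(F_x) = \Pr_{k-1}(x)$ for each $x$. I partition $F_x$ into its $\alpha_k(x)|S_k|$ points in $B_k$ and its $(1-\alpha_k(x))|S_k|$ points outside, and sum $\Pr_k(x,y)$ in each of the two regimes. If $\alpha_k(x) \le \delta_k$, the $B_k$-part contributes $0$ and the complement is rescaled by $1/(1-\alpha_k(x))$, giving a total of $\Pr_{k-1}(x)$; if $\alpha_k(x) > \delta_k$, the two contributions are $\frac{\alpha_k(x)-\delta_k}{1-\delta_k}\Pr_{k-1}(x)$ and $\frac{1-\alpha_k(x)}{1-\delta_k}\Pr_{k-1}(x)$, which again sum to $\Pr_{k-1}(x)$.

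For~\eqref{e:dom} and~\eqref{e:baddom}, I first establish two pointwise bounds on $Q_k$: namely $\Pr_k(x,y) \le \frac{1}{1-\delta_k}\Pr_{k-1}(x,y)$ for every $(x,y) \in Q_k$, and $\Pr_k(x,y) \le \Pr_{k-1}(x,y)$ whenever additionally $(x,y) \in B_k$. The first is immediate from the $\min$ on the non-$B_k$ branch, and on the $B_k$ branch it reduces to $\frac{\alpha_k(x)-\delta_k}{\alpha_k(x)(1-\delta_k)} \le \frac{1}{1-\delta_k}$, i.e.\ to $\alpha_k(x)-\delta_k \le \alpha_k(x)$. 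The second follows because the $B_k$-branch coefficient $\max\{0,\,\frac{\alpha_k(x)-\delta_k}{\alpha_k(x)(1-\delta_k)}\}$ is $\le 1$ iff $\alpha_k(x)\le 1$, which is automatic. Since $\Pr_k$ and $\Pr_{k-1}$ are extended to $Q$ by the same uniform rule on fibers of $Q \to Q_k$, these pointwise inequalities lift from $Q_k$ to $Q$; summing over $q \in S$ then yields~\eqref{e:dom} for any $S \subseteq Q$ and~\eqref{e:baddom} for any $S \subseteq B_k$.

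There is no genuine obstacle: the lemma is essentially a direct unpacking of~\eqref{def:P_k}, with the two branches in that definition engineered precisely so that the total mass on each fiber is preserved while the multiplicative distortion at any single point never exceeds $1/(1-\delta_k)$. The only minor bookkeeping subtlety is to remember that $\Pr_k$ is defined on $Q_k$ but is evaluated on subsets of $Q$ via its uniform extension, so one must check that the extension commutes with the fiberwise reweighting — which it does, because both measures are extended by the same uniform rule on fibers over $Q_k$.
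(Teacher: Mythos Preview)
Your proof is correct. The paper itself does not supply a proof of this lemma; it merely states the result and cites \cite[Lemmas~2.1 and~2.2]{DUS}. What you have written is the natural direct verification from the definition~\eqref{def:P_k}: the fiberwise mass computation for~\eqref{obs:Qmeas} and the two pointwise bounds on the reweighting coefficients for~\eqref{e:dom} and~\eqref{e:baddom} are exactly the expected argument, and all edge cases (e.g.\ $\alpha_k(x)=0$ or $\alpha_k(x)=1$, $\delta_k=0$) are handled by the hypotheses $\alpha_k(x)\in[0,1]$ and $\delta_k\in[0,1/2]$.
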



In particular, it follows from Lemma~\ref{obs:probs} that if 
\begin{equation}\label{eq:sufficient}
\sum_{k = a+1}^n \Pr_k(B_k) < 1
\end{equation}
then $\cA$ does not cover $Q$, since $B_k$ is a $Q_k$-measurable set, so by~\eqref{obs:Qmeas} we have $\Pr_n(B_k) = \Pr_k(B_k)$. For each $a \le k \le n$, define
$$\mu_k := 1 - \sum_{i = a+1}^k \Pr_i(B_i),$$
and observe that $\mu_k \le \Pr_k(R_k)$. 


\section{A general theorem}\label{sec:general}

In this section we will prove two technical results, Theorems~\ref{thm:general} and~\ref{prop:moments}, which together imply Theorems~\ref{thm:GSFES} and~\ref{thm:GMM}. We remark that Theorem~\ref{thm:general} essentially follows from~\cite[Theorem~3.1]{DUS}, but Theorem~\ref{prop:moments} introduces a new bound that is motivated geometrically, and that will prove to be crucial in the proof of Theorem~\ref{thm:GSFES}.

Given a collection $\cA$ of hyperplanes in $Q = S_1 \times \cdots \times S_n$, a probability distribution $\Pr_a$ on $Q_a$, and constants $\delta_{a+1},\dots,\delta_n \in [0,1/2]$, let the probability distributions $\Pr_k$ and functions $\alpha_k \colon Q_{k-1} \to [0,1]$ be defined as in~\eqref{def:alpha} and~\eqref{def:P_k}, and set 
\[
 M_k^{(1)} := \E_{k-1}\big[\alpha_k(x)\big] \qquad \text{and} \qquad
 M_k^{(2)} := \E_{k-1}\big[\alpha_k(x)^2\big].
\]
In order to show that\/ $\cA$ does not cover\/~$Q$, it is sufficient, by~\eqref{eq:sufficient}, to show that $\mu_n > 0$. To do so, we will bound $\Pr_k(B_k)$ in terms of the moments $M_k^{(1)}$ and $M_k^{(2)}$. As noted above, the following theorem was (essentially) proved in~\cite{DUS}.

\begin{theorem}\label{thm:general}
Let\/ $\cA$ be a collection of hyperplanes in $Q = S_1 \times \dots\times S_n$, no two of which are parallel. If
 \begin{equation}\label{eq:eta}
 \sum_{k = a+1}^n \min\bigg\{ M_k^{(1)}, \frac{M_k^{(2)}}{4\delta_k(1-\delta_k)} \bigg\}  <  1,
 \end{equation}
 then\/ $\cA$ does not cover\/~$Q$.
\end{theorem}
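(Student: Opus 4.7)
My plan is to reduce Theorem~\ref{thm:general} to a per-step upper bound on $\Pr_k(B_k)$, and then establish that bound by a fibre-wise computation followed by two elementary algebraic inequalities. Concretely, the sufficient condition~\eqref{eq:sufficient} from Section~\ref{sec:sieve} guarantees that $\cA$ fails to cover $Q$ whenever $\sum_{k=a+1}^n \Pr_k(B_k) < 1$, so it suffices to prove, for each $a < k \le n$, the pointwise bound
\[
\Pr_k(B_k) \;\le\; \min\!\bigg\{ M_k^{(1)},\; \frac{M_k^{(2)}}{4\delta_k(1-\delta_k)} \bigg\},
\]
after which the hypothesis~\eqref{eq:eta} concludes the proof.

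The first step is to rewrite $\Pr_k(B_k)$ explicitly in terms of $\alpha_k$. I decompose $Q_k$ into the fibres $F_x = \{(x,y) : y \in S_k\}$ for $x \in Q_{k-1}$; since $\Pr_{k-1}$ is extended uniformly over each fibre, $\Pr_{k-1}(F_x \cap B_k) = \alpha_k(x)\,\Pr_{k-1}(x)$ by~\eqref{def:alpha}. Inserting this into~\eqref{def:P_k}, every fibre with $\alpha_k(x) \le \delta_k$ contributes nothing to $\Pr_k(B_k)$, while each fibre with $\alpha_k(x) > \delta_k$ contributes
\[
\frac{\alpha_k(x)-\delta_k}{\alpha_k(x)(1-\delta_k)} \cdot \alpha_k(x)\,\Pr_{k-1}(x) \;=\; \frac{\alpha_k(x)-\delta_k}{1-\delta_k}\,\Pr_{k-1}(x).
\]
Summing over $x$ gives the clean formula
\[
\Pr_k(B_k) \;=\; \E_{k-1}\!\left[\frac{(\alpha_k(x)-\delta_k)^+}{1-\delta_k}\right],
\]
where $t^+ := \max\{t,0\}$.

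Next I verify two pointwise inequalities valid for all $\alpha \in [0,1]$ and $\delta \in (0, 1/2]$. First, $(\alpha-\delta)^+/(1-\delta) \le \alpha$, which is trivial when $\alpha \le \delta$ and otherwise reduces to $\alpha\delta \le \delta$, i.e.\ $\alpha \le 1$. Second, $(\alpha-\delta)^+/(1-\delta) \le \alpha^2 /\bigl(4\delta(1-\delta)\bigr)$, which after clearing denominators is $\alpha^2 - 4\delta\alpha + 4\delta^2 \ge 0$, i.e.\ $(\alpha - 2\delta)^2 \ge 0$. Taking expectation under $\Pr_{k-1}$ of each inequality applied to $\alpha_k(x)$ yields the two competing bounds $M_k^{(1)}$ and $M_k^{(2)}/(4\delta_k(1-\delta_k))$ respectively, and the desired per-step inequality follows.

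The only non-routine ingredient is the completed-square identity that gives the $M^{(2)}$ bound; once it is spotted, the rest is bookkeeping, and this is why the excerpt remarks that Theorem~\ref{thm:general} is essentially already contained in~\cite{DUS}. The non-parallel hypothesis on $\cA$ is used only through the sieve construction in Section~\ref{sec:sieve}, which requires the unique indexing $\cA = \{A_F : F \in \cF\}$; no further input from it is required for the argument above.
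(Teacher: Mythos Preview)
Your proof is correct and follows essentially the same route as the paper: you compute $\Pr_k(B_k)=\E_{k-1}\bigl[(\alpha_k(x)-\delta_k)^+/(1-\delta_k)\bigr]$ by the fibre decomposition, and then bound it by $M_k^{(2)}/(4\delta_k(1-\delta_k))$ via the completed-square inequality $(\alpha-2\delta)^2\ge 0$, exactly as in the paper. The only cosmetic difference is that for the $M_k^{(1)}$ bound the paper invokes Lemma~\ref{obs:probs} (specifically $\Pr_k(B_k)\le\Pr_{k-1}(B_k)=M_k^{(1)}$), whereas you verify the equivalent pointwise inequality $(\alpha-\delta)^+/(1-\delta)\le\alpha$ directly; these are the same content.
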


In order to show that~\eqref{eq:eta} holds in our applications, we need to bound the moments of~$\alpha_k(x)$. To state our bounds on $M_k^{(1)}$ and $M_k^{(2)}$, we will need some additional notation.  Define a function $c \colon \cP([1,a])\to [0,1]$ by setting
\begin{equation}\label{def:c}
c(I)= \max \big\{\Pr_a(H) : H \text{ is a hyperplane in $Q_a$ with } F(H) = I \big\}
\end{equation}
for each $I \subseteq [a]$, and define a function $\nu\colon \cP([a+1,n]) \to \R_{>0}$, by setting
\begin{equation}\label{def:nu}
 \nu(J) = \prod_{j \in J} \frac{1}{(1-\delta_j)|S_j|}
\end{equation}
for each $J \subseteq [a+1,n]$. Note that $c(\emptyset) = \nu(\emptyset) = 1$. For each $k \ge a$ and $x \in \R$, set
\begin{equation}\label{def:ck}
c_k(x) = \sum_{I \subseteq [a]} \sum_{J \subseteq [a+1,k]} c(I)\nu(J) x^{|I|+|J|} = \, \sum_{I \subseteq [a]} c(I) x^{|I|} \prod_{j = a + 1}^k \bigg(1 + \frac{x}{(1-\delta_j)|S_j|} \bigg).
\end{equation}
The following technical theorem provides general bounds on $M_k^{(1)}$ and $M_k^{(2)}$.

\begin{theorem}\label{prop:moments}
Let\/ $\cA$ be a collection of hyperplanes in $Q = S_1 \times \dots\times S_n$, no two of which are parallel. Then, for each\/ $a < k \le n$,
\begin{equation}\label{eq:M:bounds}
M_k^{(1)} \le \frac{c_{k-1}(1)}{|S_k|} \qquad \text{and} \qquad M_k^{(2)} \le \frac{c_{k-1}(3)}{|S_k|^2}.
\end{equation}
Moreover, if in addition $|S_k| \ge 3$ for each $k \in [n]$, and none of the hyperplanes in\/ $\cN_k$ has co-dimension\/~$1$, then
\begin{equation}\label{eq:hyperplane:removal}
M_k^{(2)} \le \frac{1}{|S_k|^2} \big(c_{k-1}(3)-2 c_{k-1}(1)+1\big).
\end{equation}
\end{theorem}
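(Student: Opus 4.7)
The plan is to prove both bounds by a single pointwise union bound on $\alpha_k(x)$, followed by a careful estimate of the measure $\Pr_{k-1}$ on hyperplanes whose fixed coordinate sets lie in $[k-1]$. For each $F \in \cN_k$, decompose $A_F = H_F \times \{y_F\}$, where $H_F \subseteq Q_{k-1}$ is the projection (a hyperplane with $F(H_F) = F \setminus \{k\}$) and $y_F \in S_k$ is the prescribed value of the $k$th coordinate. Since each $F$ contributes at most one value of $y$ to the fibre $\{y : (x,y) \in B_k\}$, we obtain the pointwise union bound
\[
|S_k|\,\alpha_k(x) \;=\; \big|\{y_F : F \in \cN_k,\, x \in H_F\}\big| \;\le\; \sum_{F \in \cN_k} \mathbf{1}_{x \in H_F}.
\]

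The central technical claim is that for every hyperplane $H \subseteq Q_{k-1}$ with $F(H) = I \cup J$, where $I \subseteq [a]$ and $J \subseteq [a+1, k-1]$, one has $\Pr_{k-1}(H) \le c(I)\nu(J)$. To prove this, iterate Lemma~\ref{obs:probs} from stage $a$ to $k-1$: at each intermediate stage $j$, the set $H$ is $Q_{j-1}$-measurable iff $j \notin J$, in which case~\eqref{obs:Qmeas} gives $\Pr_j(H) = \Pr_{j-1}(H)$; otherwise $j \in J$ and~\eqref{e:dom} yields $\Pr_j(H) \le (1-\delta_j)^{-1}\Pr_{j-1}(H)$. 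The product telescopes to $\Pr_{k-1}(H) \le \Pr_a(H)\prod_{j \in J}(1-\delta_j)^{-1}$, and since $\Pr_a$ extends uniformly across coordinates $a+1,\ldots,k-1$, the $\Pr_a$-measure of $H$ equals the measure of its projection to $Q_a$ (a hyperplane with fixed coord set $I$, hence of measure at most $c(I)$) times $\prod_{j \in J}|S_j|^{-1}$. Multiplying gives the claim.

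Given the claim, the first-moment bound is immediate: take the expectation of the union bound, apply the claim with $H = H_F$, and use the non-parallel assumption to note that the pairs $(F \cap [a], F \cap [a+1, k-1])$ are distinct across $F \in \cN_k$, so the sum is dominated by $\sum_{I,J} c(I)\nu(J) = c_{k-1}(1)$. For the second moment, square the pointwise bound and take expectation:
\[
|S_k|^2 M_k^{(2)} \;\le\; \sum_{F, F' \in \cN_k} \Pr_{k-1}(H_F \cap H_{F'}).
\]
Each intersection is either empty or a hyperplane with $F(\cdot) = (F \cup F') \setminus \{k\}$, so the claim still applies. Grouping by $K := (F \cup F') \setminus \{k\}$, the pairs $(F \setminus \{k\}, F' \setminus \{k\})$ are ordered pairs of subsets of $K$ with union $K$, of which there are $3^{|K|}$ (each element of $K$ lies in the first subset only, the second only, or both), and we obtain $|S_k|^2 M_k^{(2)} \le c_{k-1}(3)$.

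For the improvement~\eqref{eq:hyperplane:removal}, the absence of codimension-$1$ hyperplanes in $\cN_k$ forces $F \setminus \{k\} \ne \emptyset$ for every $F \in \cN_k$, so the previous count is restricted to ordered pairs of \emph{nonempty} subsets of $K$ whose union is $K$: there are $3^{|K|} - 2$ such pairs when $|K| \ge 1$, and none when $|K| = 0$. A short calculation rewrites the resulting sum as $c_{k-1}(3) - 2c_{k-1}(1) + 1$. The principal bookkeeping obstacle is the central claim, specifically tracking for each $j \in [a+1, k-1]$ whether $H$ collects a factor $|S_j|^{-1}$ from the uniform extension of $\Pr_a$, a factor $(1-\delta_j)^{-1}$ from~\eqref{e:dom}, or neither from~\eqref{obs:Qmeas}; one must verify that coordinates in $J$ pick up exactly the pair of factors comprising $\nu(J)$, while coordinates outside $J$ contribute nothing.
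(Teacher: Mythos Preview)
Your approach is essentially identical to the paper's: the central claim is precisely Lemma~3.3 there, the moment bounds come from the same expansion (Lemma~3.4), and your counting argument for the second moment and its refinement matches the paper's line for line.

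There is, however, one false assertion in your proof of the central claim. You write that ``the set $H$ is $Q_{j-1}$-measurable iff $j\notin J$'', but this is wrong: if $J$ contains any index larger than $j$, then $H$ depends on that coordinate and is \emph{not} $Q_{j-1}$-measurable, regardless of whether $j\in J$. So you cannot invoke~\eqref{obs:Qmeas} directly for $H$ at stage $j$. The conclusion $\Pr_j(H)=\Pr_{j-1}(H)$ when $j\notin F(H)$ is nevertheless true, because the passage from $\Pr_{j-1}$ to $\Pr_j$ only redistributes mass within each fibre $\{x\}\times S_j$ while preserving the total mass on each fibre, and when $j\notin F(H)$ the set $H$ is a union of such fibres (in the $j$th direction), so its measure is unchanged. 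Equivalently, the paper avoids this by running the induction top-down: at stage $m$ it works with a $Q_m$-measurable hyperplane, so when $m\notin F(A)$ the set genuinely is $Q_{m-1}$-measurable and~\eqref{obs:Qmeas} applies directly. Either fix repairs your argument; the rest is correct.
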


Before embarking on the (straightforward) proofs of Theorems~\ref{thm:general} and~\ref{prop:moments}, let us briefly discuss the bound~\eqref{eq:hyperplane:removal}, which will play an important role in the proof of Theorem~\ref{thm:SFES}. In order to apply it, we first need to remove from $\cA$ each of the codimension\/~$1$ hyperplanes, each of which is of the form $S_1 \times \dots \times S_{i-1} \times \{s\}\times S_{i+1} \times \dots \times S_n$ for some $i \in [n]$ and $s \in S_i$. Note that in doing so we remove the point $s$ from the possible values of the $i$th coordinate, effectively replacing $S_i$ by $S'_i := S_i \setminus \{s\}$ (which has at least two elements). After removing these hyperplanes, the remaining elements of $\cA$ will all have at least two fixed coordinates, and can be assumed to be hyperplanes in $Q' = S_1' \times \cdots \times S_n'$, where $S_i' = S_i$ if $\{i\} \not\in \cF(\cA)$. Removing the codimension~$1$ hyperplanes in this way makes a significant difference to our estimate on $M_k^{(2)}$, at the expense of (possibly) reducing each $|S_i|$ by~1. In practice, this turns out to often give better bounds on the removed measure.

\begin{proof}[Proof of Theorem~\ref{thm:general}]
Observe first that
\[
 \Pr_n(B_k) = \Pr_k(B_k) \le \Pr_{k-1}(B_k) = \E_{k-1}\big[\alpha_k(x)\big],
\]
where the first two steps follow by Lemma~\ref{obs:probs} (since $B_k$ is $Q_k$-measuarable), and the third follows by the definition~\eqref{def:alpha} of $\alpha_k(x)$. Moreover, by~\eqref{def:alpha} and~\eqref{def:P_k} (the definitions of $\alpha_k$ and~$\Pr_k$), we have
\begin{align}
\Pr_k(B_k) & = \sum_{x \in Q_{k-1}} \max\bigg\{ 0, \, \frac{\alpha_k(x) - \delta_k}{\alpha_k(x)( 1 - \delta_k )} \bigg\} \cdot \Pr_{k-1}\big(F_x \cap B_k \big) \nonumber\\
& = \frac{1}{1-\delta_k} \sum_{x \in Q_{k-1}} \max\big\{ 0, \, \alpha_k(x)-\delta_k \big\} \cdot \Pr_{k-1}(x) \nonumber\\
& \le \frac{1}{1-\delta_k}\sum_{x \in Q_{k-1}} \frac{\alpha_k(x)^2}{4\delta_k} \cdot \Pr_{k-1}(x)
= \frac{\E_{k-1}\big[ \alpha_k(x)^2 \big]}{4\delta_k(1-\delta_k)}, \label{eq:bound:on:Bk}
\end{align}
where we used the elementary inequality $\max\{a-d,0\} \le a^2 / 4d$, which is easily seen to hold for all $a,d > 0$ by rearranging the inequality $(a-2d)^2 \ge 0$.

It follows that the uncovered set $R$ satisfies
\[
\Pr_n(R) = 1 - \sum_{k = a+1}^n \Pr_n(B_k) \ge 1 - \sum_{k = a+1}^n \min\bigg\{ M_k^{(1)}, \frac{M_k^{(2)}}{4\delta_k(1-\delta_k)} \bigg\} > 0,
\]
by~\eqref{eq:eta}, and hence $\cA$ does not cover $Q$, as required.
\end{proof}

In the proof of Theorem~\ref{prop:moments} we will use the following notation. Given a hyperplane $A = [x_1,\ldots,x_n]$ and $X \subseteq [n]$, we define $A^X = [y_1,\ldots,y_n]$ to be the hyperplane with $y_i = x_i$ for all $i \in F(A) \cap X$, and $y_i = *$ otherwise. Note that $(A^X)^Y = A^{X \cap Y}$ for every $X, Y \subseteq [n]$.

The first step in the proof of Theorem~\ref{prop:moments} is the following easy bound on the $\Pr_k$-measure of a $Q_k$-measurable hyperplane.

\begin{lemma}\label{lem:AP}
Let\/ $a \le k \le n$, and let\/ $A$ be a\/ $Q_k$-measurable hyperplane. If $F(A) = I \cup J$, where $I \subseteq [a]$ and $J \subseteq [a+1,k]$, then
\begin{equation}\label{e:APdist}
  \Pr_k(A) \le c(I) \nu(J).
\end{equation}
\end{lemma}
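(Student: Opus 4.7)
My plan is to prove this by induction on $k$, starting from $k = a$ and building up to $k = n$. The bound is multiplicative in $J$, and the measures $\Pr_k$ are built from $\Pr_{k-1}$ one coordinate at a time, so induction is the natural approach: the factor $\frac{1}{(1-\delta_j)|S_j|}$ appearing in $\nu(J)$ should correspond exactly to the cost of going from $\Pr_{j-1}$ to $\Pr_j$ when $j \in J$.

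For the base case $k = a$, necessarily $J = \emptyset$ so $\nu(J) = 1$, and $A$ is a hyperplane in $Q_a$ with $F(A) = I$, so the claim reduces to the defining inequality $\Pr_a(A) \le c(I)$ from~\eqref{def:c}.

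For the inductive step, split into two cases according to whether $k \in J$ or not. If $k \notin J$, then $A$ is already $Q_{k-1}$-measurable, so by~\eqref{obs:Qmeas} we have $\Pr_k(A) = \Pr_{k-1}(A)$, and the induction hypothesis gives $\Pr_{k-1}(A) \le c(I)\nu(J)$ (with $J \subseteq [a+1,k-1]$) directly. If $k \in J$, write the $Q_k$-measurable hyperplane $A$ as $A' \times \{s\}$ where $A'$ is a $Q_{k-1}$-measurable hyperplane with $F(A') = I \cup (J \setminus \{k\})$ and $s \in S_k$ is the value fixed at coordinate $k$. The points of $A$ inside $Q_k$ are precisely the pairs $(x,s)$ with $x \in A'$. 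Applying~\eqref{e:dom} pointwise and using that $\Pr_{k-1}$ extends uniformly to $Q_k$ (so $\Pr_{k-1}(x,s) = \Pr_{k-1}(x)/|S_k|$), I obtain
\[
\Pr_k(A) \;=\; \sum_{x \in A'} \Pr_k(x,s) \;\le\; \frac{1}{1-\delta_k}\sum_{x \in A'} \frac{\Pr_{k-1}(x)}{|S_k|} \;=\; \frac{1}{(1-\delta_k)|S_k|}\,\Pr_{k-1}(A').
\]
By the induction hypothesis $\Pr_{k-1}(A') \le c(I)\nu(J \setminus \{k\})$, and since $\nu(J) = \nu(J \setminus \{k\}) \cdot \frac{1}{(1-\delta_k)|S_k|}$, the bound $\Pr_k(A) \le c(I)\nu(J)$ follows.

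I do not anticipate a substantive obstacle; the only thing to be careful about is the bookkeeping between $\Pr_{k-1}$ viewed as a measure on $Q_{k-1}$ and its uniform extension to $Q_k$, and confirming that~\eqref{e:dom} applies to arbitrary subsets (not just $Q_k$-measurable ones) so that a single fibre pair $(x,s)$ can be controlled. Both points are immediate from the definition~\eqref{def:P_k} of $\Pr_k$, which bounds $\Pr_k(x,y)$ by $\frac{1}{1-\delta_k}\Pr_{k-1}(x,y)$ pointwise.
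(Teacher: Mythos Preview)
Your proof is correct and follows essentially the same approach as the paper: induction on $k$, with the base case given by the definition of $c(I)$, the case $k\notin J$ handled via~\eqref{obs:Qmeas}, and the case $k\in J$ handled by applying~\eqref{e:dom} and then the induction hypothesis to the hyperplane with coordinate $k$ unfixed. The only cosmetic difference is that the paper writes the unfixed hyperplane as $A^{[k-1]}$ rather than $A'$, and applies~\eqref{e:dom} to the set $A$ directly rather than summing pointwise.
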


\begin{proof}
The proof is by induction on~$k$. Note first that for $k = a$ the conclusion follows immediately from the definition~\eqref{def:c} of the function $c$, since $\nu(\emptyset) = 1$. So let $k \in [a+1,n]$, and assume that the claimed bound holds for~$\Pr_{k-1}$.

Note first that if $k \not\in F(A)$ then $A$ is $Q_{k-1}$-measurable, and so the claimed bound follows immediately by~\eqref{obs:Qmeas} and the induction hypothesis. So assume that $k \in F(A)$, and observe that, by~\eqref{e:dom}, we have
\[
 \Pr_k( A ) \le \frac{1}{1-\delta_k} \Pr_{k-1}( A ) = \frac{1}{(1-\delta_k)|S_k|} \Pr_{k-1}\big( A^{[k-1]} \big).
\]
since the probability measure $\Pr_{k-1}$ is extended uniformly on each fibre. Since $A^{[k-1]}$ is $Q_{k-1}$-measurable, by the induction hypothesis we have
\[
\Pr_{k-1}\big( A^{[k-1]} \big) \le  c(I) \nu( J \setminus \{k\}),
\]
and so, recalling the definition~\eqref{def:nu} of the function $\nu$, the claimed bound follows.
\end{proof}

We will next prove the following bound on the $t$th moments $\E_{k-1}\big[ \alpha_k(x)^t \big]$.

\begin{lemma}\label{lem:moments}
For each\/ $t \in \N$ we have
$$\E_{k-1}\big[ \alpha_k(x)^t \big] \le \frac{1}{|S_k|^t} \sum_{F_1, \dots , F_t \in \cN_k} c\big((F_1\cup \dots \cup F_t) \cap [a] \big) \cdot \nu\big((F_1\cup \dots \cup F_t) \cap [a+1, k-1] \big).$$
\end{lemma}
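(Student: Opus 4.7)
The plan is to bound $\alpha_k(x)$ pointwise by a sum of indicator functions of $Q_{k-1}$-measurable hyperplanes, raise to the $t$th power, and then invoke Lemma~\ref{lem:AP} on each term of the resulting sum after taking expectation.

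More precisely, the key observation is that every $F \in \cN_k$ contains the index $k$. Hence each hyperplane $A_F$ with $F \in \cN_k$ fixes the $k$th coordinate to a unique value, so that the intersection $A_F \cap F_x$ with a fibre $F_x = \{x\} \times S_k$ is either empty (if $x \notin A_F^{[k-1]}$) or a single point (if $x \in A_F^{[k-1]}$). Using $B_k = \bigcup_{F \in \cN_k} A_F$ and a union bound on these single points, I would deduce
\[
\alpha_k(x) \;=\; \frac{|\{y \in S_k : (x,y) \in B_k\}|}{|S_k|} \;\le\; \frac{1}{|S_k|} \sum_{F \in \cN_k} \id\big[x \in A_F^{[k-1]}\big].
\]

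Raising this bound to the $t$th power and expanding the resulting product of sums gives
\[
\alpha_k(x)^t \;\le\; \frac{1}{|S_k|^t} \sum_{F_1,\dots,F_t \in \cN_k} \id\big[x \in A_{F_1}^{[k-1]} \cap \cdots \cap A_{F_t}^{[k-1]}\big].
\]
Next I would observe that each $A_{F_i}^{[k-1]}$ is a $Q_{k-1}$-measurable hyperplane with $F(A_{F_i}^{[k-1]}) = F_i \cap [k-1]$, and hence their intersection is either empty or a $Q_{k-1}$-measurable hyperplane whose set of fixed coordinates is exactly $(F_1 \cup \cdots \cup F_t) \cap [k-1]$.

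Taking expectation under $\Pr_{k-1}$ and applying Lemma~\ref{lem:AP} to each (possibly empty) intersection, with $I = (F_1 \cup \cdots \cup F_t) \cap [a]$ and $J = (F_1 \cup \cdots \cup F_t) \cap [a+1,k-1]$, yields
\[
\E_{k-1}\big[\alpha_k(x)^t\big] \;\le\; \frac{1}{|S_k|^t} \sum_{F_1,\dots,F_t \in \cN_k} c(I)\,\nu(J),
\]
which is the claimed bound. There is no real obstacle here: the argument is essentially bookkeeping, and the only subtle point is recognising that, because every $F \in \cN_k$ contains $k$, an element $(x,y) \in B_k$ can be encoded by choosing which $A_F$ contains it, so the contribution of each $F$ to $|F_x \cap B_k|$ is at most one rather than $|S_k|$, thereby saving the factor $|S_k|^{-t}$ that appears in the statement.
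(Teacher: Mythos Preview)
Your proposal is correct and follows essentially the same route as the paper: bound $\alpha_k(x)$ by $\frac{1}{|S_k|}\sum_{F\in\cN_k}\id[x\in A_F^{[k-1]}]$ using that each $F\in\cN_k$ contains~$k$, expand the $t$th power, and apply Lemma~\ref{lem:AP} to the resulting intersections of $Q_{k-1}$-measurable hyperplanes. The paper's argument is identical in substance and nearly identical in presentation.
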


\begin{proof}
Observe first that, for each $x \in Q_{k-1}$, we have
$$\alpha_k(x) \, = \frac{1}{|S_k|} \sum_{y \in S_k} \id\big[ (x,y) \in B_k \big] \le \frac{1}{|S_k|} \sum_{y \in S_k} \sum_{F \in \cN_k} \id\big[ (x,y) \in A_F \big],$$
by the union bound, and the definitions~\eqref{def:B_i} and~\eqref{def:alpha} of $B_k$ and~$\alpha_k$. Note that, given $x \in Q_{k-1}$ and $F \in \cN_k$, there exists $y \in S_k$ with $(x,y) \in A_F$ if and only if $x \in A_F^{[k-1]}$, and moreover such a $y$ (if it exists) is unique. It follows that
\[
 \alpha_k(x) \le \frac{1}{|S_k|} \sum_{F \in \cN_k} \id\big[ x \in A_F^{[k-1]} \big].
\]
Note also that if $A_1$ and $A_2$ are hyperplanes, then $A_1 \cap A_2$ is either the empty set, or a hyperplane with set of fixed coordinate indices $F(A_1) \cup F(A_2)$. Therefore, by Lemma~\ref{lem:AP}, we have
\begin{align*}
\E_{k-1}\big[ \alpha_k(x)^t \big] & \le \frac{1}{|S_k|^t} \sum_{F_1,\dots, F_t \in \cN_k} \Pr_{k-1}\big( x \in A_{F_1}^{[k-1]} \cap \dots \cap A_{F_t}^{[k-1]} \big)\\
& \le \frac{1}{|S_k|^t} \sum_{F_1, \dots, F_t \in \cN_k} c\big( (F_1\cup \dots \cup F_t) \cap [a] \big) \cdot \nu\big((F_1\cup \dots \cup F_t) \cap [a+1, k-1] \big),
\end{align*}
as required.
\end{proof}

The claimed bounds on $M_k^{(1)}$ and $M_k^{(2)}$ now follow easily.

\begin{proof}[Proof of Theorem~\ref{prop:moments}]
By Lemma~\ref{lem:moments}, we have
\begin{align*}
\E_{k-1}\big[\alpha_i(x)^t\big] & \le \frac{1}{|S_k|^t} \sum_{F_1, \dots, F_t \in \cN_k} c\big( (F_1\cup \dots \cup F_t) \cap [a] \big) \cdot \nu\big((F_1\cup \dots \cup F_t) \cap [a+1, k-1] \big)\\
 &\le \frac{1}{|S_k|^t} \sum_{I \subseteq [a]} \sum_{J \subseteq [a+1,k-1]} \sum_{\substack{X_1, \dots , X_t \subseteq [k-1] \\ X_1 \cup \dots \cup X_t = I \cup J}} c(I) \nu(J)\\
 &= \frac{1}{|S_k|^t} \sum_{I \subseteq [a]} \sum_{J \subseteq [a+1,k-1]} (2^t-1)^{|I| + |J|} c(I) \nu(J)\, = \,  \frac{c_{k-1}(2^t-1)}{|S_k|^t},
\end{align*}
which proves~\eqref{eq:M:bounds}. To prove~\eqref{eq:hyperplane:removal}, suppose that $\cF(\cA)$ contains no singletons, and observe that, by Lemma~\ref{lem:moments}, we have
\begin{align*}
|S_k|^2 \E_{k-1}\big[ \alpha_k(x)^2 \big] & \le \sum_{F_1, F_2 \in \cN_k} c\big( (F_1 \cup F_2) \cap [a] \big) \cdot \nu\big((F_1 \cup F_2) \cap [a+1, k-1] \big)\\
 & \le \, \sum_{I \subseteq [a]} \sum_{J \subseteq [a+1,k-1]} \sum_{\substack{\emptyset \ne X_1, X_2 \subseteq [k-1] \\ X_1 \cup X_2 = I \cup J}} c(I) \nu(J)\\
 & = \, 1 + \sum_{I \subseteq [a]} \sum_{J \subseteq [a+1,k-1]} \big( 3^{|I| + |J|} - 2 \big) c(I) \nu(J)\\
 &= \, c_{k-1}(3)-2c_{k-1}(1)+1,
\end{align*}
as required.
\end{proof}

\section{Proof of Theorem~\ref{thm:GMM}}\label{sec:proofGMM}

In order to deduce Theorem~\ref{thm:GMM} from Theorems~\ref{thm:general} and~\ref{prop:moments},
it will suffice to show that there is an appropriate choice of $C$ and $\delta_1,\delta_2,\dots,\delta_n$ such that $\mu_n > 0$.

\begin{proof}[Proof of Theorem~\ref{thm:GMM}]
Let $(q_k)_{k \ge 1}$ be a sequence of integers with $\liminf_{k \to \infty} q_k / k > 3$, and let $N \in \N$ and $\eps > 0$ be such that $q_k > (3 + \eps)k$ for all $k \ge N$. Let $C = C(N,\eps)$ be sufficiently large, let $n \in \N$, and for each $k \in [n]$, let $S_k$ be a set of size $q_k$. We will show that if $\cA = \{A_F : F \in \cF \}$ is a finite collection of hyperplanes in $Q = S_1 \times \dots \times S_n$, no two of which are parallel, and $F(A) \not\subseteq [C]$ for every $A \in \cA$, then $\cA$ does not cover $Q$.

Fix $\delta_1 = \cdots = \delta_n = \eps/6$, and assume (without loss of generality) that $\eps$ is sufficiently small. We will start with the uniform probability measure $\Pr_0$ on $Q$, and construct inductively the probability measures $\Pr_k$ as described in Section~\ref{sec:sieve}. By Theorem~\ref{thm:general} it suffices to show that
\[
\sum_{k=1}^n \frac{M_k^{(2)}}{4\delta_k(1-\delta_k)} < 1.
\]
To prove this, note first that $M_k^{(2)}=0$ for all $1\le k \le C$, since $F(A) \not\subseteq [C]$ for every $A \in \cA$. So let $C < k \le n$, and observe that, by Theorem~\ref{prop:moments}, we have
$$M_k^{(2)} \le \, \frac{c_{k-1}(3)}{|S_k|^2} = \frac{1}{|S_k|^2} \prod_{j = 1}^{k-1} \bigg(1 + \frac{3}{(1-\delta_j)|S_j|} \bigg).$$
Now, since $|S_j| = q_j > (3 + \eps)j$ for all $j \ge N$, and by our choice of $\delta_j$, it follows that
$$\prod_{j = N}^{k-1} \bigg(1 + \frac{3}{(1-\delta_j)|S_j|} \bigg) \le \exp\bigg( \sum_{j = N}^{k-1} \frac{3}{(1-\eps/6)(3 + \eps) j} \bigg) \le k^{1-\eps/9}.$$
Moreover, $\prod_{j = 1}^{N-1} \big(1 + \frac{3}{(1-\delta_j)|S_j|} \big) \le 3^N$. Hence, assuming that $C \ge N$ (so $|S_k| \ge 3k$), we have
$$\sum_{k=1}^n \frac{M_k^{(2)}}{4\delta_k(1-\delta_k)} \le \, \sum_{k = C}^n \frac{3^N \eps^{-2}}{k^{1+\eps/9}} < \, 1$$
if $C = C(N,\eps)$ is sufficiently large, as required.
\end{proof}

We will next show that the condition on the sequence $(q_k)_{k \ge 1}$ in Theorem~\ref{thm:GMM} is close to best possible. To be precise, we will prove the following proposition.

\begin{prop}\label{prop:almost:sharp}
There exists a sequence of integers\/ $(q_k)_{k \ge 1}$ with $q_k \ge 2$ for all $k \in \N$ and
$$\liminf_{k \to \infty} \frac{q_k}{k} = 1$$
such that the following holds. For each $C > 0$, there exists $n \in \N$ and a collection\/ $\cA$ of hyperplanes that cover\/~$Q := [q_1] \times \dots \times [q_n]$, no two of which are parallel, and with $F(A) \cap [C] = \emptyset$ for every\/ $A \in \cA$.
\end{prop}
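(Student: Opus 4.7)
My plan is to give an explicit construction: a sequence $(q_k)_{k\ge 1}$ satisfying the $\liminf$ condition, together with a family of covers parametrized by $C$. The first step is a reduction. Given any non-parallel collection $\cA$ of hyperplanes in $Q = [q_1]\times\cdots\times[q_n]$ with $F(A)\cap[C]=\emptyset$ for every $A\in\cA$, projection onto the coordinates $[C+1,n]$ produces a non-parallel family in the tail box $[q_{C+1}]\times\cdots\times[q_n]$; conversely, any non-parallel cover of the tail box lifts (by inserting $*$'s in the coordinates in $[C]$) to a cover of $Q$ with the required property. So it suffices to find $(q_k)$ with $\liminf q_k/k=1$ and, for each $C$, a non-parallel cover of some tail box $[q_{C+1}]\times\cdots\times[q_n]$.

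I would construct $(q_k)$ block by block. Fix a rapidly increasing sequence $0=N_0<N_1<N_2<\cdots$, and let $B_j=(N_{j-1},N_j]$. On each block $B_j$, I would choose $q_k$ such that (i) $q_k$ is close to $k$ (ensuring $\liminf q_k/k=1$ globally), and (ii) the sub-box $\prod_{k\in B_j}[q_k]$ admits an explicit non-parallel covering by hyperplanes whose fixed-coordinate sets are entirely contained in $B_j$. Given this, for any $C$ one picks $j$ with $N_{j-1}\ge C$ and lifts the block-$j$ cover to the full box: the lifted hyperplanes satisfy $F(A)\subseteq B_j\subseteq(C,\infty)$, so $F(A)\cap[C]=\emptyset$ as required.

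The heart of the proof is exhibiting the non-parallel cover in each block. For concreteness, one can take $q_k$ equal to (or slightly above) $k$ on $B_j$ and build the cover by an iterative peeling scheme. Stage~1: include the codimension-$1$ hyperplanes $\{x:x_k=1\}$ for each $k\in B_j$; this covers every $x$ having some coordinate equal to~$1$. Stage~$\ell\ge 2$: treating the residual sub-box (where every coordinate is at least $\ell$) in shifted form, use hyperplanes of codimension $\ell$ with fixed values all equal to $\ell$, which, as in the scheme implicit in Lemma~\ref{lem:moments}, covers every residual $x$ with $|\{i:x_i=\ell\}|\ge\ell$ for some $\ell$. At each stage the non-parallelism constraint is respected because each parallel class is used only once, across stages.

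The main obstacle I expect is (ii): showing that the iterative construction can actually be completed. The scheme just described covers everything except the $x$ in the sub-box with $|\{i:x_i=\ell\}|<\ell$ for every $\ell$, so the block length $|B_j|$ together with the $q_k$'s on $B_j$ must be chosen so that this final residue either is empty or can be mopped up by the remaining unused non-parallel hyperplane classes (of which there are $2^{|B_j|}-1$ in total). By taking $|B_j|$ much larger than $N_{j-1}$ and allowing small perturbations of $q_k$ around $k$ to break degeneracies, the combinatorial counting should close, but verifying this in detail—and simultaneously keeping $q_k$ close enough to $k$ for $\liminf q_k/k=1$ to hold—is the technical heart of the argument.
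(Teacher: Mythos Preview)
Your reduction to covering a tail sub-box by non-parallel hyperplanes is correct and is also the paper's first move. The gap is in the covering step.

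Run your peeling scheme on a block $B_j$ of size $m$. At stage~$\ell$ you place, for every $\ell$-subset $F\subseteq B_j$, the hyperplane with fixed set $F$ and all fixed values equal to~$\ell$. Summing over $\ell=1,\dots,m$ this consumes each of the $2^m-1$ parallelism classes exactly once, so nothing remains for ``mopping up''. Yet the residual is nonempty whenever $q_k\ge k$ on $B_j$ and $N_{j-1}\ge 1$: the diagonal point $x_k=k$ has pairwise distinct coordinates, all at least~$2$, so $|\{i:x_i=\ell\}|\le 1<\ell$ for every $\ell\ge 2$ and no coordinate equals~$1$; hence it is never hit. Perturbing the $q_k$ does not remove this obstruction (the analogous diagonal survives), and if you halt at some $L<m$ to reserve high-codimension classes, the residual only grows while each reserved class covers at most a $\prod_{i\in F}q_i^{-1}$ fraction of the box. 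I do not see how the counting closes without a genuinely new idea. (Incidentally, Lemma~\ref{lem:moments} is a moment estimate for the sieve and encodes no covering scheme; that cross-reference is misplaced.)

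The paper sidesteps explicit constructions entirely. Its Lemma~\ref{lem:greedy:covering} shows, by choosing for each nonempty $F$ a hyperplane $A_F$ greedily so as to cover the largest fraction of what remains, that a non-parallel cover of $[q_1]\times\cdots\times[q_n]$ exists whenever $\prod_{k}(1+1/q_k)\ge n\log n$. One then takes $q_k=\lfloor k(1-2/\log k)\rfloor$, giving $q_k/k\to 1$ and
\[
\prod_{k>C}\Big(1+\frac{1}{q_k}\Big)=\exp\!\bigg(\sum_{k>C}\Big(\frac{1}{k}+\frac{2}{k\log k}\Big)+O(1)\bigg)=\Omega\big(n(\log n)^2\big),
\]
so the lemma applies to the tail box for all sufficiently large~$n$. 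This greedy averaging argument is precisely the missing ingredient in your outline; with it in hand, the block decomposition becomes unnecessary.
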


The first step is the following simple lemma.

\begin{lemma}\label{lem:greedy:covering}
Let\/ $n \ge 3$, and let $q_1,\ldots,q_n \ge 2$ be a sequence of integers such that
$$\prod_{k = 1}^n \bigg( 1 + \frac{1}{q_k} \bigg) \, \ge \, n \log n.$$
Then\/ $Q= [q_1] \times \dots\times [q_n]$ can be covered with hyperplanes, no two of which are parallel.
\end{lemma}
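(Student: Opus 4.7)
The plan is to apply the probabilistic method to a prefix sub-box obtained after sorting the coordinates. First I would assume WLOG that $q_1 \le q_2 \le \cdots \le q_n$ and let $m$ be the largest index with $q_m \le m$. Such an $m$ exists and satisfies $m \ge 3$ under the hypothesis: if instead $m \le 2$, then $q_k \ge k+1$ for every $k \ge 3$, and the telescoping bound gives $\prod_{k=1}^n(1+1/q_k) \le (3/2)^2 \prod_{k=3}^n (k+2)/(k+1) = 9(n+2)/16 < n\log n$ for $n \ge 3$ (with the cases $m \le 1$ giving an even smaller bound), contradicting the hypothesis. The same telescoping applied to indices $k > m$ (where $q_k \ge k+1$ by maximality of $m$) yields $\prod_{k>m}(1+1/q_k) \le (n+2)/(m+2)$, which combined with the hypothesis and the monotonicity of $x \mapsto x\log x/(x+2)$ on $[2,\infty)$ gives the sub-box inequality $\prod_{k=1}^m(1+1/q_k) \ge m\log m$. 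Any non-parallel covering of $Q_m := [q_1]\times\cdots\times[q_m]$ extends canonically to a non-parallel covering of $Q$ by taking $F \subseteq [m]$ throughout, so it suffices to cover $Q_m$.

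To cover $Q_m$, I would independently pick, for each nonempty $F \subseteq [m]$, a uniformly random hyperplane $H_F$ with $F(H_F) = F$. Writing $T_F := \prod_{k \in F}q_k$, the expected number of uncovered points is $\E[X] = \bigl(\prod_{k=1}^m q_k\bigr)\prod_{\emptyset \ne F \subseteq [m]}(1 - 1/T_F)$. Using the strict inequality $1 - x < e^{-x}$ for $x > 0$ together with the identity $\sum_{F \ne \emptyset} 1/T_F = \prod_{k=1}^m(1+1/q_k) - 1$, this is strictly less than $(\prod_k q_k)\exp\bigl(1 - \prod_{k=1}^m(1+1/q_k)\bigr)$, so it suffices to establish the key inequality
$$\sum_{k=1}^m \log q_k \,\le\, \prod_{k=1}^m(1+1/q_k) - 1,$$
as then $\E[X] < 1$ and a valid deterministic covering exists.

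The hard part is this key inequality. The crude bounds $\sum \log q_k \le m\log m$ (from $q_k \le m$) and $\prod(1+1/q_k) - 1 \ge m\log m - 1$ (from the sub-box condition) leave a gap of only $1$, which I would close via an extremal analysis. The curve $q \mapsto (\log q, \log(1+1/q))$ is convex (viewing $\log(1+1/q)$ as a function of $\log q$, the second derivative equals $q/(q+1)^2 > 0$), so a linear-programming argument on its convex hull shows that the maximum of $\sum \log q_k$ subject to $\sum \log(1+1/q_k) \ge \log(m\log m)$ and $q_k \in [2, m]$ is attained at a ``bang-bang'' configuration with some $t$ of the $q_k$'s equal to $2$ and the remaining $m - t$ equal to $m$. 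The sub-box condition then forces $t \ge (\log(m\log m) - 1)/\log(3/2) = \Omega(\log m)$, giving a deficit $m\log m - \sum \log q_k = t \log(m/2) = \Omega((\log m)^2)$, which far exceeds the required slack of $1$ for all $m \ge 3$. The cases $n \in \{4, 5\}$ are vacuous since $(3/2)^n < n\log n$ there.
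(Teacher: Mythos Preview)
Your approach differs from the paper's. The paper argues by induction on $n$: if the product condition already holds for the first $n-1$ coordinates it invokes the induction hypothesis, and otherwise deduces $q_n<n$ (after sorting), whence every $q_k\le n-1$ and the crude bound $1+\sum_k\log q_k\le 1+n\log(n-1)<n\log n\le\prod_k(1+1/q_k)$ finishes the greedy/first-moment count in one line. You instead pass non-inductively to the sub-box $Q_m$ with $m$ the largest index satisfying $q_m\le m$; the reduction and the sub-box inequality $\prod_{k\le m}(1+1/q_k)\ge m\log m$ are correct and pleasant. The price is that you only obtain $q_k\le m$ rather than $q_k\le m-1$, so the crude bound leaves a gap of exactly~$1$, which you then try to close by a convexity/bang-bang analysis.

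That last step is where the proposal breaks. Your lower bound $t\ge(\log(m\log m)-1)/\log(3/2)$ (obtained via $(m-t)\log(1+1/m)<1$ and by dropping the $\log(1+1/m)$ term from the denominator) gives, at $m=3$, only $t\gtrsim 0.48$ and hence a deficit $t\log(m/2)\approx 0.19$, not something that ``far exceeds~$1$''. And $m=3$ cannot be avoided: it arises for instance when $n=3$, and more generally whenever $q_1=q_2=q_3=2$ with $q_k>k$ for $k\ge 4$. Your closing remark that $n\in\{4,5\}$ is vacuous is true but does not address this; you presumably meant that $m\in\{4,5\}$ are impossible (since $(3/2)^m<m\log m$ there), which is correct, but $m=3$ is not. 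The repair is easy: either handle $m=3$ by hand (only $(2,2,2)$ is feasible, and $3\log 2+1<27/8$), or sharpen the bang-bang estimate by using the full chord inequality $g(u)\le g(b)+\frac{g(a)-g(b)}{b-a}(b-u)$ on $[a,b]=[\log 2,\log m]$, which yields
\[
T=\frac{\log(m\log m)-m\log(1+1/m)}{\log(3/2)-\log(1+1/m)},
\]
and then $T\log(m/2)\ge 1$ does hold at $m=3$ (one gets $T\approx 2.80$, deficit $\approx 1.13$). You should also note that the extremal configuration may carry one interior point, and absorb it into the estimate.
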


\begin{proof}
The proof is by induction on $n$, so first let $n = 3$, and note that if $2 \le q_1\le q_2\le q_3$ satisfy $\prod_{k = 1}^3 (1 + q_k^{-1}) > 3\log 3$, then $q_1 = q_2 = 2$. Now observe that $[2] \times [2]$ (and hence $[2]\times[2] \times[q_3]$) can be covered by hyperplanes, no two of which are parallel.

For the induction step, observe first that, by the induction  hypothesis, if $\prod_{k=1}^{n-1}(1+ q_k^{-1})\ge (n-1)\log(n-1)$ then we can find hyperplanes (with fixed coordinates in $[n-1]$) which cover~$Q$. We may therefore assume that
$$1 + \frac{1}{q_n} > \frac{n\log n}{(n-1)\log(n-1)} > 1 + \frac{1}{n},$$
and hence (without loss of generality) that $2 \le q_1 \le \dots \le q_n < n$.

We now cover $Q$ greedily: for each set $\emptyset \ne F \subseteq [n]$ in turn we choose a hyperplane $A_F$ with fixed coordinates $F$ so as to cover as much of the remaining (uncovered) subset of $Q$ as possible. Since $Q$ can be partitioned into exactly $\prod_{k \in F} q_k$ such hyperplanes, there must exist some choice of $A_F$ that covers at least a proportion $\prod_{k \in F} q_k^{-1}$ of the remaining set. Thus, after all the hyperplanes have been chosen, the remaining set has size at most
\begin{align*}
|Q| \prod_{\emptyset \ne F \subseteq [n]} \bigg( 1 - \prod_{k \in F} \frac{1}{q_k} \bigg)
& \le 
|Q| \exp\bigg( - \sum_{\emptyset \ne F \subseteq [n]} \prod_{k \in F} \frac{1}{q_k} \bigg)\\
& = \exp\bigg( 1 + \sum_{k = 1}^n \log q_k - \prod_{k = 1}^n \bigg( 1 + \frac{1}{q_k} \bigg) \bigg).
\end{align*}
Now simply observe that
$$1 + \sum_{k = 1}^n \log q_k - \prod_{k = 1}^n \bigg( 1 + \frac{1}{q_k} \bigg) < 0,$$
since $1 + \sum_{k = 1}^n \log q_k \le 1 + n\log(n-1) < n\log n$, whereas $\prod_{k=1}^n ( 1 + q_k^{-1}) \ge n\log n$, by assumption. It follows that the number of uncovered points is less than $1$, as required.
\end{proof}

We can now easily deduce Proposition~\ref{prop:almost:sharp}.

\begin{proof}[Proof of Proposition~\ref{prop:almost:sharp}]
Assume that $C$ is sufficiently large, and set
$$q_k := \bigg\lfloor \bigg( 1 - \frac{2}{\log n} \bigg) n \bigg\rfloor$$
for each $k > C$. Observe that $\lim_{k \to \infty} q_k/k = 1$, and that
\begin{align*}
\prod_{k = C+1}^n \bigg( 1 + \frac{1}{q_k} \bigg)
& = \exp\bigg( \sum_{k = C+1}^n \frac{1}{q_k} + \frac{O(1)}{q_k^2} \bigg) = \exp\bigg( \sum_{k = C+1}^n \bigg( \frac{1}{k} + \frac{2}{k\log k} \bigg) + O(1) \bigg)\\
& = \exp\Big( \log n + 2\log\log n + O_C(1) \Big) = \Omega\big( n(\log n)^2 \big).
\end{align*}
Thus, for all sufficiently large $n$, we have
\[
 \prod_{k = C+1}^n \bigg( 1 + \frac{1}{q_k} \bigg) \ge (n - C) \log(n - C),
\]
and it follows, by Lemma~\ref{lem:greedy:covering}, that we can cover $[q_{C+1}] \times \dots \times [q_n]$ with hyperplanes, no two of which are parallel. But this implies that we can cover $[q_1]\times\dots\times[q_n]$ with hyperplanes whose fixed coordinates do not intersect~$[C]$, as required.
\end{proof}

\section{The Erd\H{o}s--Selfridge problem}\label{sec:proofSFES}

In this section we will prove Theorem~\ref{thm:GSFES} (and hence also Theorem~\ref{thm:SFES}). To do so, we will again apply the sieve introduced in Section~\ref{sec:sieve}, but this time we will need to choose the various parameters much more carefully. In particular, we will deal with the primes in three groups: first the set $\{3,5,7,11\}$, then the primes between $13$ and $73$, and finally the primes larger than $73$. We will discuss these in reverse order, so as to motivate the bounds we prove.

Let $\cB$ be a collection of hyperplanes in $P := [3] \times [5] \times \dots \times [p_n]$, no two of which are parallel. Our aim is to show that $\cB$ does not cover $P$. To do so, we will in fact apply our sieve to a modified collection, obtained by removing the co-dimension~$1$ hyperplanes, as described after the statement of Theorem~\ref{prop:moments}, for all primes $p \le 73$. After doing so, we obtain a collection $\cA$ of hyperplanes in $Q = S_2 \times \dots \times S_n$, where $S_k = [p_k - 1]$ for each $2 \le k \le 21$, and $S_k = [p_k]$ for each $22 \le k \le n$, such that no two hyperplanes in $\cA$ are parallel, and if $F(A) = \{i\}$ for some $A \in \cA$ then $i \ge 22$. We remark that we will use some results from~\cite{DUS} to deal with the large primes, and our indexing of the sets $S_k$ is chosen to avoid a conflict with the notation used there. It will also be convenient (see Section~\ref{sec:smallprimes}, below) to assume (as we may) that $A \not\subseteq B$ for any $A,B \in \cA$ with $A \ne B$. 

\subsection{The primes greater than 73}\label{sec:bigprimes}

For large primes, it will suffice to apply the results of~\cite[Section~6]{DUS}. 
To state the results we will use, let us first recall some notation. Assume that we have chosen $\delta_6,\ldots,\delta_{21}$ and some probability distribution $\Pr_a = \Pr_5$ supported on 
$$R_5 \subseteq Q_5 := S_2 \times \cdots \times S_5.$$ 
Now, noting that $p_{21} = 73$, set $\kappa := c_{21}(3)$ and define
\begin{equation}\label{fk:def}
f_k = f_k(\cA) := \frac{\kappa}{\mu_k} \prod_{21 < i \le k}\bigg( 1+\frac{3p_i-1}{(1-\delta_i)(p_i-1)^2} \bigg)
\end{equation}
for each $k \ge 21$, where the constants $\{ \delta_i : i > 21\}$ will be chosen later, and recall that 
$$\mu_k = 1 - \sum_{i = 6}^k \Pr_i(B_i)$$
for each $5 \le k \le n$. In order to apply the results stated below, we need to check that condition~(20) of~\cite{DUS}, which states that
\begin{equation}\label{kappa:def:assumption}
 M^{(2)}_k \le \frac{\kappa}{(p_k - 1)^2} \prod_{21 < i < k}\bigg( 1+\frac{3p_i-1}{(1-\delta_i)(p_i-1)^2} \bigg) = \frac{\mu_{k-1} f_{k-1}}{(p_k - 1)^2},
\end{equation}
is satisfied for every $k > 21$. To see this, note that
$$c_k(3) = c_{k-1}(3) \bigg( 1 + \frac{3}{(1-\delta_k)p_k} \bigg) \le c_{k-1}(3) \bigg( 1 + \frac{3p_k - 1}{(1-\delta_k)(p_k - 1)^2} \bigg)$$
for every $k \ge 21$, and observe that therefore, by Theorem~\ref{prop:moments}, we have
$$M_k^{(2)} \le \frac{c_{k-1}(3)}{|S_k|^2} \le \frac{c_{21}(3)}{|S_k|^2} \prod_{21 < i < k}\bigg( 1+\frac{3p_i-1}{(1-\delta_i)(p_i-1)^2} \bigg),$$
as required, since $|S_k| = p_k > p_k - 1$. The following theorem, which gives an almost optimal termination criterion when $k$ is large, was proved in~\cite{DUS}.


\begin{theorem}\label{thm:f:sufficient}
Let\/ $k \ge 10$. If\/ $\mu_k > 0$ and\/ $f_k(\cA) \le (\log k + \log\log k - 3)^2k$, then the collection of hyperplanes~$\cA$ does not cover\/~$Q$.
\end{theorem}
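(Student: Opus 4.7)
\medskip
\noindent\textbf{Proof proposal.} The plan is to continue the sieve from stage $k$ onward, choosing the parameters $\delta_{k+1},\dots,\delta_n$ carefully, and to show that the total mass removed after stage~$k$ is strictly less than~$\mu_k$. More precisely, by \eqref{obs:Qmeas} and the definition of~$\mu_n$, if we can prove that $\sum_{i = k+1}^n \Pr_i(B_i) < \mu_k$, then $\Pr_n(R) \ge \mu_n > 0$ and therefore~$\cA$ does not cover~$Q$. The tools are the bound \eqref{kappa:def:assumption}, which is precisely the device built into the definition of $f_{k-1}$, and the bound $\Pr_i(B_i) \le M^{(2)}_i / \bigl(4\delta_i(1-\delta_i)\bigr)$ extracted from~\eqref{eq:bound:on:Bk} in the proof of Theorem~\ref{thm:general}. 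Combining these yields the central estimate
\[
\Pr_i(B_i) \le \frac{\mu_{i-1} \, f_{i-1}}{4\delta_i(1-\delta_i)(p_i - 1)^2}
\qquad\text{for every } i > k.
\]

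I would then track the joint evolution of $\mu_i$ and $f_i$. Writing $g_i := \Pr_i(B_i)/\mu_{i-1}$, the above gives $\mu_i/\mu_{i-1} \ge 1 - g_i$, while directly from~\eqref{fk:def}
\[
\frac{f_i}{f_{i-1}} = \frac{\mu_{i-1}}{\mu_i}\bigg(1 + \frac{3p_i - 1}{(1-\delta_i)(p_i - 1)^2}\bigg),
\]
so the product $F_i := f_i \mu_i$ is monotone non-decreasing and depends only on the $\delta_j$'s. The goal is to pick each $\delta_i$ so as to minimize the worst of the two competing effects: making $\delta_i$ large shrinks $g_i$, but inflates~$F_i$ through the factor $1/(1-\delta_i)$. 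Guided by the shape of the threshold, the natural scale is $\delta_i$ of order $(\log p_i)/p_i$, so that $\delta_i (p_i-1)^2 \asymp p_i \log p_i$ matches the scale of $f_{i-1}$ just before it saturates its threshold. With such a choice $F_i / F_k \le \prod_{j>k}(1 + O(1/p_j)) = O((\log n/\log k)^3)$ by Mertens, and $g_i$ behaves like $f_{i-1}/(p_i \log p_i)$, which by $p_i \sim i \log i$ yields $g_i = O(1/i)$ at the boundary of the threshold.

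I would prove the theorem by a descending-margin induction: maintain the invariant
\[
f_i \le T(i) := (\log i + \log\log i - 3)^2 \, i
\qquad \text{for every } k \le i \le n,
\]
choosing $\delta_{i+1}$ at each step to be the near-optimal value derived above; the assumption $f_k \le T(k)$ is exactly what is needed to launch the induction. The preservation step amounts to checking that the multiplicative growth of $f_i$ does not outrun the growth of $T(i)$, which boils down to a routine but delicate calculation using $1/(1-g_{i+1}) \le 1 + 2g_{i+1}$ and a Mertens-type expansion of the product. Simultaneously, the bound on $g_i$ produces a tail sum $\sum_{i>k} g_i = O(1)$ that can be made strictly less than~$1$ once $k \ge 10$ is large enough, giving $\mu_n \ge \mu_k \prod_{i>k}(1-g_i) > 0$. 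The main obstacle is the sharpness of the threshold constant~$3$ in $(\log k + \log\log k - 3)^2 k$: losing any slack in the inequality $\max\{a-d,0\} \le a^2/(4d)$ or in the Mertens asymptotics forces a weaker constant, so the optimization must be performed with the exact $(\log i + \log\log i)^2 i$ shape and the $\delta_i$ tuned accordingly.
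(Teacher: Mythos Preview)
The paper does not actually prove this theorem here; it is quoted from~\cite{DUS}, so there is no in-paper argument to compare against. Your overall architecture---track the pair $(\mu_i,f_i)$ via the recursion coming from~\eqref{eq:bound:on:Bk} and~\eqref{kappa:def:assumption}, and maintain the inductive invariant $f_i \le T(i)$---is indeed the right one and matches the approach of~\cite{DUS}. The key observation that $F_i := \mu_i f_i$ depends only on the $\delta_j$'s, and that the game is to keep $g_i := \Pr_i(B_i)/\mu_{i-1}$ below $1$ while not letting $f_i$ overshoot $T(i)$, is exactly right.

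However, your heuristics for the choice of $\delta_i$ and the size of $g_i$ are off, and this is not cosmetic. With $\delta_i \asymp (\log p_i)/p_i$ one has $4\delta_i(1-\delta_i)(p_i-1)^2 \asymp p_i\log p_i \sim i(\log i)^2$, so at the threshold $f_{i-1}\sim i(\log i)^2$ you get $g_i \asymp 1$, not $O(1/i)$; and the factor $1/(1-\delta_i)$ then contributes $\exp\bigl(\sum_j \log p_j/p_j\bigr)$, which by Mertens grows like a power of $i$, not like a power of $\log i$. The correct choice is $\delta_i$ close to $1/2$ (the endpoint of the allowed range), which maximizes $4\delta_i(1-\delta_i)=1$ and gives $g_i \le f_{i-1}/(p_i-1)^2$. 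Using the refined prime estimate $p_i-1 \approx i(\log i+\log\log i-1)$, one then gets $g_i \lesssim (1/i)\bigl(1-2/(\log i+\log\log i-1)\bigr)^2 < 1/i$ at the boundary, and the inductive step $f_i \le T(i)$ goes through precisely because the constant in the threshold is $3$ rather than $1$: the gap $2/(i(L-3)) - 2/(iL)$, with $L=\log i+\log\log i$, absorbs the $6/(p_i-1)$ term coming from $F_i/F_{i-1}$. Finally, note that you never need (and will not get) $\sum_{i>k} g_i = O(1)$; the sum is of order $\log n$. What suffices is that each $g_i<1$, which the invariant guarantees, so that the finite product $\mu_n \ge \mu_k\prod_{i>k}(1-g_i)$ is strictly positive.
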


Using Theorem~\ref{thm:f:sufficient}, one can now compute (see the discussion in~\cite[Section~6]{DUS}) weaker sufficient conditions on $f_k$ for the event that the uncovered set is non-empty. In particular, we will use the following result, cf.~\cite[Corollary~6.3]{DUS}.

\begin{corollary}\label{cor:gk}
If $f_{21}(\cA) \le 138.877$, then the collection of hyperplanes~$\cA$ does not cover\/~$Q$.
\end{corollary}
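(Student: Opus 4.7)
The plan is to deduce the corollary from Theorem~\ref{thm:f:sufficient} by a finite numerical iteration: we will start from the initial value $f_{21}(\cA) \le 138.877$, track how $f_k$ evolves as $k$ increases, and locate some $k^* > 21$ at which the termination criterion $f_{k^*} \le (\log k^* + \log\log k^* - 3)^2 k^*$ holds (while $\mu_{k^*} > 0$).

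The first step is to derive a one-step recursion for $f_k$. Directly from the definition~\eqref{fk:def},
\[
\frac{f_k}{f_{k-1}} = \frac{\mu_{k-1}}{\mu_k} \cdot \bigg(1 + \frac{3p_k - 1}{(1-\delta_k)(p_k-1)^2}\bigg).
\]
On the other hand, the proof of Theorem~\ref{thm:general} (equation~\eqref{eq:bound:on:Bk}) combined with~\eqref{kappa:def:assumption} gives
\[
\mu_{k-1} - \mu_k \; = \; \Pr_k(B_k) \; \le \; \frac{M_k^{(2)}}{4\delta_k(1-\delta_k)} \; \le \; \frac{\mu_{k-1}\, f_{k-1}}{4\delta_k(1-\delta_k)(p_k-1)^2},
\]
so $\mu_{k-1}/\mu_k \le \big(1 - f_{k-1}/[4\delta_k(1-\delta_k)(p_k-1)^2]\big)^{-1}$ whenever the denominator is positive. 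Together these give an explicit upper bound $f_k \le g(f_{k-1}, p_k, \delta_k)$, which I would then optimize over $\delta_k \in (0,1/2]$; a short calculation shows that the optimum is attained at an explicit $\delta_k = \delta_k(f_{k-1},p_k)$ roughly of the form $\sqrt{f_{k-1}}/(2(p_k-1))$ (up to lower-order corrections), yielding a recurrence of the form $f_k \le f_{k-1} + c(p_k, f_{k-1})$, with $c$ of order $\sqrt{f_{k-1}}/p_k + 3/p_k$.

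The next step is to run this recursion starting from $f_{21} \le 138.877$, stepping through $p_{22} = 79, p_{23} = 83, \ldots$ and so on, and to verify numerically that the sequence $f_k$ stays controlled and that at some specific $k^*$ the inequality $f_{k^*} \le (\log k^* + \log\log k^* - 3)^2 k^*$ is satisfied. Since the right-hand side grows like $k(\log k)^2$ while each step increases $f_k$ by only $O(\sqrt{f_{k-1}}/p_k + 1/p_k)$, the recursion will eventually be overtaken, provided the cumulative growth factors $\prod_{i=22}^{k^*} \big(1 + O(1/p_i^2)\big)$ do not exhaust the initial slack. Once such a $k^*$ is found, Theorem~\ref{thm:f:sufficient} immediately yields that $\cA$ does not cover $Q$.

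The main obstacle is purely computational: choosing the constant $138.877$ tightly enough that the iterated bound stays below the moving target $(\log k + \log\log k - 3)^2 k$ at some $k^*$ \emph{before} the sequence $f_k$ has had time to blow up; this is what makes the precise numerical threshold $138.877$ appear, as opposed to a cleaner round number. In practice this is a straightforward finite check implemented on a computer, and the value $138.877$ is chosen to just barely succeed.
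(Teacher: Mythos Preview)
Your proposal is correct and matches the paper's approach: the paper does not give an explicit proof of this corollary here but imports it from~\cite[Corollary~6.3]{DUS}, noting only that ``Using Theorem~\ref{thm:f:sufficient}, one can now compute (see the discussion in~\cite[Section~6]{DUS}) weaker sufficient conditions on $f_k$.'' Your outline---deriving the one-step recursion for $f_k$ from~\eqref{fk:def},~\eqref{eq:bound:on:Bk} and~\eqref{kappa:def:assumption}, optimizing over $\delta_k$, and iterating numerically until the criterion of Theorem~\ref{thm:f:sufficient} is met---is exactly that computation.
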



In order to prove Theorem~\ref{thm:GSFES}, it will therefore suffice to show that we can choose the probability distribution $\Pr_5$ and constants $\delta_6,\ldots,\delta_{21}$ such that $f_{21}(\cA) \le 138.877$.

\subsection{The primes between $13$ and $73$}\label{sec:mediumprimes}

We will next deduce from Corollary~\ref{cor:gk} a sufficient condition\footnote{A rough diagram of the pairs $(c_5(1),c_5(3))$ that were sufficient to prove the required bound on $f_{21}(\cA)$ was determined. Based on this, the linear combination $c_5(3)- 3c_5(1)/4$ appears to give the best `figure of merit' among simple linear combinations.}  on $\Pr_5$ for the event that the uncovered set is non-empty. 

\begin{lemma}\label{lem:middle:primes}
If\/ $\Pr_5$ satisfies\/ $c_5(3) - 3c_5(1) / 4 \le 9.019$, then there exists a choice of the constants\/ $\delta_6,\ldots,\delta_{21}$ such that\/ $f_{21}(\cA) < 138.874$.
\end{lemma}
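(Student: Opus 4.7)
The plan is to expand $f_{21}(\cA)$ as an affine function of $c_5(1)$ and $c_5(3)$ (with coefficients depending on the free parameters $\delta_6,\ldots,\delta_{21}$), and then choose those parameters numerically so that the hypothesis $c_5(3) - 3c_5(1)/4 \le 9.019$ forces $f_{21}(\cA) < 138.874$.

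Since the product in \eqref{fk:def} is empty when $k = 21$, we have $f_{21}(\cA) = c_{21}(3)/\mu_{21}$, so the target inequality is equivalent to
$$c_{21}(3) + 138.874 \sum_{k=6}^{21} \Pr_k(B_k) < 138.874.$$
The first term is controlled directly: by \eqref{def:ck},
$$c_{21}(3) = c_5(3) \prod_{j=6}^{21}\bigg(1+\frac{3}{(1-\delta_j)(p_j-1)}\bigg),$$
which is proportional to $c_5(3)$. For each $k \in \{6,\ldots,21\}$, the construction of $\cA$ ensures that no hyperplane in $\cN_k$ has codimension~$1$, and also $|S_k| = p_k - 1 \ge 12$, so the sharper bound \eqref{eq:hyperplane:removal} of Theorem~\ref{prop:moments} applies. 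Combined with \eqref{eq:bound:on:Bk} this yields
$$\Pr_k(B_k) \le \frac{c_{k-1}(3) - 2 c_{k-1}(1) + 1}{4\delta_k(1-\delta_k)(p_k-1)^2}.$$
Expanding $c_{k-1}(1)$ and $c_{k-1}(3)$ via \eqref{def:ck} shows that the right-hand side is affine in $c_5(3)$ and $c_5(1)$, with non-negative coefficient on $c_5(3)$, non-positive coefficient on $c_5(1)$, and a positive constant term coming from the ``$+1$''. Summing over $k$, the inequality to be verified takes the form
$$A \cdot c_5(3) - B \cdot c_5(1) + C \le 138.874,$$
where $A, B, C > 0$ are explicit functions of $\delta_6, \ldots, \delta_{21}$.

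Since $c_5(1) \ge 0$ trivially, the hypothesis $c_5(3) - 3c_5(1)/4 \le 9.019$ will force this provided we exhibit values of $\delta_6, \ldots, \delta_{21} \in [0,1/2]$ satisfying $B \ge 3A/4$ and $9.019 A + C \le 138.874$ simultaneously. The main obstacle is therefore purely numerical: this is a $16$-dimensional smooth constrained minimisation problem which must be solved with enough precision to verify these two inequalities. The structure of the $M_k^{(2)}/(4\delta_k(1-\delta_k))$ terms suggests optimising the $\delta_j$ roughly of order $1/\sqrt{p_j}$, after which the remaining search is small. The slim gap between the target $138.874$ and the threshold $138.877$ of Corollary~\ref{cor:gk} indicates that $c_5(3) - 3c_5(1)/4$ and $9.019$ are chosen essentially at the boundary of what the method permits, so the numerical check requires care but no new ideas.
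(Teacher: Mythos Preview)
Your reduction to a linear inequality in $c_5(1)$ and $c_5(3)$ is correct, and matches the paper's use of~\eqref{eq:hyperplane:removal} together with~\eqref{eq:bound:on:Bk}. The gap is in the final step. You try to find a \emph{single} choice of $\delta_6,\ldots,\delta_{21}$ for which $B\ge 3A/4$ and $9.019A+C\le 138.874$; this would give a uniform $\delta$-vector working for every $\Pr_5$ in the hypothesis region. But the lemma only asks for $\delta$'s that may depend on $\Pr_5$, and the paper exploits this: it reduces to a finite grid of dominating points (using monotonicity of $c_{21}(3)/\hat\mu_{21}$ in $c_5(1)$ and $c_5(3)$, together with $1\le c_5(1)\le 5$ from $c_5(3)-1\ge 3(c_5(1)-1)$), and then \emph{re-optimises the $\delta_k$ separately at each grid point}. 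The worst value obtained is $138.873682$, leaving slack of under $3\times 10^{-4}$; with so little room, insisting on a single $\delta$-vector valid along the entire line $c_5(3)=3c_5(1)/4+9.019$ may well push the bound above $138.874$. You also discard the information $c_5(1)\ge 1$ in favour of $c_5(1)\ge 0$, which costs you further.

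So the plan is sound in outline but the crucial numerical claim --- that one $\delta$-vector simultaneously achieves $B\ge 3A/4$ and $9.019A+C\le 138.874$ --- is (a) not verified, and (b) strictly stronger than what is needed or what the paper establishes. Either carry out that verification, or follow the paper and allow the $\delta_k$ to vary with $(c_5(1),c_5(3))$, covering the constraint region by a finite dominating grid and optimising at each point.
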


\begin{proof}
Set $\hat\mu_5 := \mu_5 = 1$ and define
\begin{equation}\label{eq:mu:hat}
\hat\mu_k 
\, := \, \hat\mu_{k-1} - \frac{c_{k-1}(3) - 2c_{k-1}(1) + 1}{ 4 \delta_k(1 - \delta_k) |S_k|^2 }
\end{equation}
for each $k \in \{6,\ldots,21\}$. Recall that, by~\eqref{eq:bound:on:Bk} and Theorem~\ref{prop:moments}, we have
$$\Pr_k(B_k) \le \frac{M_k^{(2)}}{4\delta_k(1-\delta_k)} \le \frac{c_{k-1}(3)-2 c_{k-1}(1)+1}{4\delta_k(1-\delta_k)|S_k|^2},$$
so $\hat\mu_k \ge \mu_k$, for each $k \in \{6,\ldots,21\}$, and hence $f_{21}(\cA) = c_{21}(3)/\mu_{21} \le c_{21}(3)/\hat\mu_{21}$.

Now, observe that
\begin{equation}\label{eq:ck:recursion}
c_k(x) = c_{k-1}(x) \bigg( 1 + \frac{x}{(1-\delta_k)(p_k - 1)} \bigg)
\end{equation}
for each $x \in \{1,3\}$ and $k \in \{6,\ldots,21\}$, and therefore (for fixed $\delta_k$) we may write $c_{21}(3) = g\big( c_5(3) \big)$ and $\hat\mu_{21} = h\big( c_5(1), c_5(3) \big)$ as functions of $c_5(1)$ and $c_5(3)$. Moreover, the function $g(x)$ is increasing, and it follows from~\eqref{eq:mu:hat} and~\eqref{eq:ck:recursion} that the function $h(x,y)$ is increasing in~$x$ and decreasing in~$y$. It follows that $c_{21}(3)/\hat\mu_{21}$ is increasing in $c_5(3)$ and decreasing in $c_5(1)$, provided $\hat\mu_{21} > 0$. Thus, to bound $f_{21}(\cA)$ from above for all pairs $(c_5(1),c_5(3))$ with $c_5(3) - 3c_5(1)/4 \le 9.019$, it is enough to bound $c_{21}(3)/\hat\mu_{21}$ for a finite set of pairs $(c_5(1),c_5(3))$ that `dominate' the region $c_5(3) - 3c_5(1)/4 \le 9.019$, and satisfy $\hat\mu_{21} > 0$.

To do this, note that $c_5(1) \ge 1$ and $c_5(3)-1 \ge 3 (c_5(1)-1)$, by~\eqref{def:ck}. We may therefore assume that $c_5(1)\le 5$, since otherwise $c_5(3) - 3c_5(1)/4 \ge 9c_5(1)/4 - 2 > 9.019$. We therefore only need to cover the part of the region $c_5(3) - 3c_5(1)/4 \le 9.019$ with $1 \le c_5(1) \le 5$. We do so by looping through values of $c_5(1)$ from 1 to 5 in steps of $10^{-4}$, i.e., we check the point
$$\big( u(i),v(i) \big) := \bigg( \frac{i}{10^4}, \, 9.019 + \frac{3(i+1)}{4  \cdot 10^4} \bigg)$$
in the $(c_5(1),c_5(3))$-plane for each $10^4 \le i \le 5 \cdot 10^4$. Note that $\big( u(i),v(i) \big)$ dominates the set 
$$I(i) := \Big\{ \big( c_5(1),c_5(3) \big) : c_5(3) - 3c_5(1)/4 \le 9.019 \textup{ and } 10^{-4}i \le c_5(1) \le 10^{-4}(i+1) \Big\},$$ 
in the sense that if $(x,y) \in I(i)$ then $x \ge u(i)$ and $y \le v(i)$, so (by the monotonicity properties proved above) any upper bound on $c_{21}(3)/\hat\mu_{21}$ that holds at the point $( u(i),v(i) )$ applies to all points of $I(i)$. Hence, if an upper bound for $c_{21}(3)/\hat\mu_{21}$ holds for each pair $( u(i),v(i) )$ in the range above, then it holds whenever $\Pr_5$ satisfies $c_5(3) - 3c_5(1)/4 \le 9.019$.

Now for each $10^4 \le i \le 5 \cdot 10^4$ we choose the constants $\delta_6,\ldots,\delta_{21} \in(0,1/2]$ so as to minimize the ratio $c_{21}(3)/\hat\mu_{21}$ after processing the prime $73$. The optimization of the $\delta_k$ was made by first taking a heuristic choice, and then performing coordinate-wise optimization repetitively on each $\delta_k$ until the value of $c_{21}(3)/\hat\mu_{21}$ converged (which it did fairly rapidly). We also checked that $\hat\mu_{21} > 0$ for each of these points.

The maximum value of $c_{21}(3)/\hat\mu_{21}$ obtained for any of these points was $138.873682$, and hence $f_{21}\le 138.874$ for any $\Pr_5$ such that $c_5(3) - 3c_5(1)/4 \le 9.019$, as required.
\end{proof}

\subsection{Constructing the measure $\Pr_5$}\label{sec:smallprimes}

It remains to construct a measure on the uncovered set $R_5 \subseteq S_2 \times \cdots \times S_5$ obtained after removing all hyperplanes corresponding to arithmetic progressions whose moduli involve only the primes~3, 5, 7 and~11. 
By Corollary~\ref{cor:gk} and Lemma~\ref{lem:middle:primes}, it will suffice to prove the following lemma.

\begin{lemma}\label{lem:small:primes}
 For each\/ $\cA$ as above, there exists\/ $\Pr_5$ such that\/ $c_5(3) - 3c_5(1) / 4 \le 9.019$.
\end{lemma}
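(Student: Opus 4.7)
The plan is to define $\Pr_5$ directly from the sub-collection
\[
\cA_5 := \big\{ A \in \cA : F(A) \subseteq \{2,3,4,5\} \big\}
\]
of $\cA$, which is the only portion that affects $R_5$, and then to verify the bound $c_5(3)-\tfrac34 c_5(1)\le 9.019$ by a finite case analysis. The problem is genuinely finite because $Q_5 = [2]\times[4]\times[6]\times[10]$ has only $480$ points, and by our preparation of $\cA$ every member of $\cA_5$ has $|F(A)|\ge 2$, no two are parallel, and none contains another. Hence $\cA_5$ amounts to specifying, for each of the $11$ subsets $I\subseteq\{2,3,4,5\}$ with $|I|\ge 2$, at most one hyperplane $A_I$ with $F(A_I)=I$, subject to the non-containment condition.

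For each such $\cA_5$ I would look for the probability distribution $\Pr_5$ on $R_5$ that minimises
\[
c_5(3)-\tfrac34 c_5(1) \;=\; \tfrac14 \,+\, \sum_{\emptyset\ne I\subseteq\{2,3,4,5\}} \!\bigl(3^{|I|}-\tfrac34\bigr)\, c(I).
\]
Each $c(I)$ is the maximum of the linear functionals $\Pr_5\mapsto \Pr_5(H)$ over the hyperplanes $H$ of $Q_5$ with $F(H)=I$, so this minimisation can be written as a linear program: introduce variables $c_I\ge 0$, impose $\Pr_5(H)\le c_I$ for every such $H$, require $\Pr_5$ to be a probability distribution vanishing off $R_5$, and minimise $\sum_I(3^{|I|}-\tfrac34)c_I$. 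This LP has at most $480$ variables and a modest number of constraints. As a baseline check, when $\cA_5=\emptyset$ the uniform measure on $Q_5$ already gives
\[
c_5(3)-\tfrac34 c_5(1) \;=\; \prod_{k=2}^{5}\!\bigl(1+\tfrac{3}{|S_k|}\bigr)-\tfrac34\prod_{k=2}^{5}\!\bigl(1+\tfrac{1}{|S_k|}\bigr) \;\approx\; 6.73,
\]
comfortably under the budget $9.019$.

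To make the case analysis tractable, I would quotient by the coordinate-wise symmetry group $\prod_{k=2}^{5}\mathrm{Sym}(S_k)$, which acts on $Q_5$ and preserves the entire setup, including the objective and the non-containment condition. This dramatically reduces the number of essentially distinct $\cA_5$'s. For each reduced configuration, the LP above yields both the optimal value of $c_5(3)-\tfrac34c_5(1)$ and the corresponding minimiser $\Pr_5$; typically the minimiser is a tilted version of the uniform measure on $R_5$ that places more mass on cells contained in many of the removed $A_I$, so as to equalise $\Pr_5(H)$ across hyperplanes with the same $F(H)$.

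The main obstacle is controlling the configurations of $\cA_5$ that bring $c_5(3)-\tfrac34 c_5(1)$ close to the $9.019$ threshold, namely those in which several $A_I$ share a substantial intersection so that $R_5$ is shrunk unevenly and some $c(I)$ is pushed well above its baseline value $\prod_{i\in I}|S_i|^{-1}$. The non-containment condition (which, for instance, forbids $A_{I'}\subseteq A_I$ whenever $I'\subsetneq I$ with both in $\cA_5$) substantially limits how bad these configurations can be, and after verifying the LP bound over all reduced configurations the lemma follows. The margin between the easy value $\approx 6.73$ and the target $9.019$ is what allows this finite (and ultimately computer-assisted) check to succeed.
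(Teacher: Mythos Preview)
Your framework is exactly the one the paper uses: set up, for each configuration of hyperplanes with fixed sets contained in $\{2,3,4,5\}$, a linear program on probability distributions supported on $R_5$ with auxiliary variables $c_I$ bounding the $c(I)$, minimise $\sum_I(3^{|I|}-3/4)c_I$, and reduce by the coordinate-wise symmetry group. So the mathematical content of your plan matches the paper.

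The gap is a computational one, and it is not small. After the symmetry and non-containment reductions you describe, the paper reports $6{,}025{,}640{,}717$ configurations of $\cA_5$, which it calls ``still too large to conveniently construct optimized probability distributions for each configuration.'' Your sketch asserts tractability at exactly this point without an argument. The paper's additional idea is to defer the four hyperplanes $A_{45},A_{245},A_{345},A_{2345}$: first solve the LP with only $A_{23},\dots,A_{235}$ present (just $7637$ configurations), and then observe that deleting the unknown set $U=A_{45}\cup A_{245}\cup A_{345}\cup A_{2345}$ of mass $p\le c_{45}+c_{245}+c_{345}+c_{2345}$ and renormalising can raise $c_5(3)-\tfrac34 c_5(1)$ to at most $\bigl(c_5(3)-\tfrac34 c_5(1)-p/4\bigr)/(1-p)$. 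This bound handles almost all cases; only a handful require adding $A_{45}$, then $A_{245}$, etc., back into the LP. That cascade is what turns a six-billion-case check into a $45$-second computation. Your proposal is missing this step.

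A minor slip: you write that non-containment ``forbids $A_{I'}\subseteq A_I$ whenever $I'\subsetneq I$''. The containment goes the other way: if $I'\subsetneq I$ then $A_I$ is the smaller hyperplane, and the relevant forbidden event is $A_I\subseteq A_{I'}$.
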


\begin{proof}
Let us write $\cF$ for the collection of subsets of $\{2,3,4,5\}$ with $|F| \ge 2$. For each set $F \in \cF$, we need to choose a hyperplane $A_F$ with fixed set $F$, and for each such family we need to construct a measure $\Pr_5$ supported on the uncovered set. Not surprisingly, there are far too may configurations to deal with easily, so we need to make a few reductions.

Recall first that (by assumption) no hyperplane in $\cA$ is contained in another. Moreover, note that we only need to study configurations `up to isomorphism', in the following sense. Let us write $F < F' $ if $\sum_{i \in F} 2^i < \sum_{i\in F'} 2^i$ (i.e., $F$ precedes $F'$ in colexicographic order), and for each $F \subseteq \{2,3,4,5\}$ and $i \in F$, define $b(F,i) \in S_i$ so that $A_F \subseteq \{ x_i = b(F,i) \}$. Now, suppose there exists a pair $(F,i)$ such that $b(F,i) \ge b(I,i) + 2$ for every $I < F$ with $i \in I$. Then we can transpose $b(F,i)$ and $b(F,i) - 1$ in $S_i$ to obtain an isomorphic configuration $\cA'$ which is lexicographically smaller, since $A'_F < A_F$, but $A'_I = A_I$ for all $I < F$.


Applying these reductions reduces the number of configurations to 6,025,640,717 which, while it represents substantial progress, is still too large to conveniently construct optimized probability distributions for each configuration. However, the main contribution to the large number of configurations comes from the choice of the `last' few hyperplanes, namely $A_{45}$, $A_{245}$, $A_{345}$, and $A_{2345}$. For example, we might have as many as $2\times 4\times 6\times 10=480$ choices for $A_{2345}$, as we are selecting a single point in~$Q_5$ (although in practice the number of choices is reduced somewhat by the comments above). Ignoring the choices for $A_{45}$, $A_{245}$, $A_{345}$, and $A_{2345}$ reduces the number of configurations to just 7637, which is far more manageable.

Our strategy is therefore as follows. We first consider a choice of the hyperplanes, $A_{23}$, $A_{24}$, $A_{25}$, $A_{34}$, $A_{35}$, $A_{234}$ and $A_{235}$, without including the last four hyperplanes $A_{45}$, $A_{245}$, $A_{345}$ and $A_{2345}$. In order to optimize the probability distribution on the uncovered region $R := Q_5 \setminus (A_{23} \cup \dots \cup A_{235})$, we construct a linear programming problem with variables $x_r$ for each $r\in R$ representing the probability of the atom~$\{r\}$. For each non-empty set $I \subseteq \{2,3,4,5\}$, and each hyperplane $H$ in $Q_5$ with $F(H) = I$, we include the constraint
$$\sum_{r \in R \cap H}  x_r \le c_I,$$
where the $c_I$ are new variables giving upper bounds on the $c(I)$, cf.~\eqref{def:c}. (Note that we include the constraints corresponding to sets not in $\cF$, since we need to bound $c(I)$ for all subsets $I \subseteq \{2,3,4,5\}$.)
We also add the constraints
\[
 x_r\ge 0 \qquad \text{and} \qquad \sum_{r\in R} x_r = 1
\]
to ensure that we have a probability measure supported on~$R$, and then minimize
\begin{equation}\label{eq:minimise:this}
 \sum_{I \subseteq \{2,3,4,5\}} \big( 3^{|I|} - 3/4 \big) c_I ,
\end{equation}
where we define $c_\emptyset=1$. We define $\Pr_5$ to be the probability distribution corresponding to this minimum, i.e., we set $\Pr_5(r) = x_r$ for each $r \in R$. Recalling~\eqref{def:ck}, note that
$$c_5(3) - \frac{3c_5(1)}{4} = \sum_{I \subseteq \{2,3,4,5\}} \big( 3^{|I|} - 3/4 \big) c(I),$$
and observe that the minimum occurs when $c_I = c(I)$.  

We are therefore done, except for the (important) fact that we have not restricted the measure to be zero
on the set
$$U : = A_{45}\cup A_{245} \cup A_{345} \cup A_{2345}.$$ 
To do so, we simply remove the measure from the (unknown) set $U$, and uniformly rescale the measure to again give a probability measure. We claim that this can increase the value of $c_5(3) - 3c_5(1)/4$ to at most
\begin{equation}\label{e:onepointrem}
 \frac{c_5(3) - 3c_5(1)/4 - p/4}{1 - p},
\end{equation}
where $p$ is the probability assigned to~$U$. To see this, observe that removing the measure on $U$ does not increase any $c(I)$, and decreases $c(\emptyset)$ by~$p$. Since $c_5(3) - 3c_5(1)/4$ is a positive linear combination of the $c(I)$, with $c(\emptyset)$ occurring with coefficient $1/4$, it follows that $c_5(3)-3c_5(1)/4$
decreases by at least $p/4$. Renormalizing the measure then increases each $c(I)$ (and hence this linear combination of the $c(I)$) by a factor of $1 - p$.

\begin{table}
\begin{tabular}{lc}
Configuration & $c_5(3) - 3c_5(1)/4$\\\hline
11**, 2*1*, *22*, 121*, 1**1, *3*2, 13*3, **34, 2*31, *232, 1233 & 9.018070\\
11**, 2*1*, *22*, 121*, 1**1, *3*2, 13*3, **34, 2*33, *232, 1233 & 9.018070\\[+2ex]
\end{tabular}
\caption{Configurations on $Q_5$ with $c_5(3) - 3c_5(1)/4 \ge 9.018$.}\label{t:configs}
\end{table}

To complete the proof, we bound the probability $p$ of the unspecified set $U$ by 
$$p \le c_{45}+c_{245}+c_{345}+c_{2345},$$
where the $c_I$ are the bounds on $c(I)$ given by the linear programming problem. We then check that the bound
in \eqref{e:onepointrem} is less than 9.018 (this bound was chosen, after some experimentation, to be just below the worst case value given in Table~\ref{t:configs}). If so, then we proceed to the next configuration. There are 90 (out of 7637) configurations of $(A_{23},\dots,A_{235})$ where this fails. For these, we loop through all choices of $A_{45}$ and perform the above calculation with just $A_{245}$, $A_{345}$, and $A_{2345}$ unspecified. From these 90 configurations we obtain 1083 configurations with $A_{45}$ included, but for only 12 of these does our bound still exceed 9.018. These 12 give rise to 312 configurations including $A_{245}$, of which 3 still exceed our bound. These 3 give rise to 216 configurations where we are forced to include $A_{345}$, but only 2 which still exceed our bound. Finally, these 2 give 142 configurations where we are forced to include all the~$A_F$, but only two have $c_5(3) - 3c_5(1)/4 \ge 9.018$, and these are listed in Table~\ref{t:configs}. We deduce that for all choices of the hyperplanes $\{ A_F : F\in \cF \}$ in $Q_5$ we can find a probability measure $\Pr_5$ on $R_5$ such that $c_5(3) - 3 c_5(1) / 4 < 9.018071$.

All calculations were performed in C using the Gurobi linear optimization package~\cite{Gurobi}
to solve the LP minimizations. With the strategy as described above, the calculations
to determine the worst case configurations took about 45 seconds on a laptop.
\end{proof}

As noted above, Theorem~\ref{thm:GSFES} follows immediately from Corollary~\ref{cor:gk} and Lemmas~\ref{lem:middle:primes} and~\ref{lem:small:primes}. In fact, since the results from~\cite{DUS} (which we used to deal with the large primes) did not require the assumption that the moduli are square-free, we actually proved something slightly stronger: if the moduli are distinct and each prime $p\le 73$ in their prime factorization occurs to a power at most~1, then at least one of the moduli must be even. In other words, the `square-free' condition is only needed on the 73-smooth part of the moduli. Of course, if we could reduce ``$p \le 73$" to ``$p < 3$" then the Erd\H{o}s--Selfridge problem would be solved, so it would be interesting to see to what extent the bound $73$ could be reduced.


\end{document}